\pgfplotsset{compat=newest}
	\newtheorem{theorem}{Theorem}[section]
	\newtheorem{lemma}[theorem]{Lemma}
	\newtheorem{corollary}[theorem]{Corollary}
	\newtheorem{proposition}[theorem]{Proposition}
	\theoremstyle{definition}
        \newtheorem{claim-proof}{Claim}
	\newtheorem{definition}[theorem]{Definition}
	\newtheorem{remark}[theorem]{Remark}
        \newcommand{\modx}[1]{\bmod \, #1}
\begin{document}
	
\title{Strong Edge-Based Roman Domination in Fuzzy Graphs} 

\author{J. C. Valenzuela-Tripodoro$^{a}$\thanks{Email: \texttt{jcarlos.valenzuela@uca.es} ORCID:0000-0002-6830-492X {\bf Corresponding Author}}
 \and
P. Garc\'ia-V\'azquez$^{b}$\thanks{Email: \texttt{pgvazquez@us.es} ORCID:0000-0001-9208-9409}
\and
M. Cera L\'opez$^{b}$ \newline \thanks{Email: \texttt{mcera@us.es} ORCID: 0000-0002-3343-736X}}

\maketitle

\begin{center}
$^a$ Departamento de Matem\'aticas, Universidad de C\'adiz, Algeciras Campus, Spain \\
	\medskip
$^b$ Departamento de Matem\'atica Aplicada I, Universidad de Sevilla, Spain
    \\
	\medskip
\end{center}
		
\begin{abstract}

This work is related to the extension of the well-known problem of Roman domination in graph 
theory to fuzzy graphs. A variety of approaches have been used to explore the concept of 
domination in fuzzy graphs. This study uses the concept of strong domination, considering 
the weights of the strong edges. We introduce the strong-neighbors Roman domination number 
of a fuzzy graph and establish some correlations with the Roman domination in graphs. The strong-neighbors 
Roman domination number is determined for specific fuzzy graphs, including complete and complete
bipartite fuzzy graphs. Besides, several general bounds are given. In addition, 
we characterize the fuzzy graphs that reach the extreme values with particular attention to fuzzy strong 
cycles and paths. 
\end{abstract}

\noindent
{\bf Keywords:} Roman domination; fuzzy graphs

\noindent
AMS Subj.\ Class.\ (2020): 05C12, 05C76

\section{Introduction}
Zadeh introduced fuzzy set theory in 1965~\cite{Z65}, paving the way for fuzzy logic. Fuzzy logic tackles the 
vagueness and uncertainty inherent in human cognition, especially in areas like pattern recognition, information 
communication, and abstraction. It suggests assigning a membership grade between 0 and 1 to elements within 
subsets of a universal set. This approach of ambiguity defined by a membership assigned to each element of 
a set through a fuzzy value has found application in various fields including computer science, AI, decision-making, 
information theory, systems engineering, control systems, expert systems, pattern analysis, business management, 
operational research, and robotics (see~\cite{Z01}). There are also advances of fuzzy set theory in mathematics fields as topology, 
abstract algebra, geometry, analysis, and especially graph theory~\cite{MN01}. The work by Mordeson and 
Nair~\cite{MN00} provides a comprehensive overview of the applications of fuzzy graph theory in areas such 
as cluster analysis, database theory, network analysis, and information theory.

Graph theory is essential in analysing and designing networks, meeting 
specific criteria and demands. The topology of a network can be modeled as 
an undirected graph $G=(V,E)$, where $V$ represents the set of vertices and 
$E$ represents the set of all connecting links between pairs of vertices. The 
communication efficiency and the fault tolerance of a network have been
 intensively studied in graph theory, because graph parameters can
 theoretically model many real measures in a network. To determine the
 strength of connectedness of a graph, numerous connectivity parameters
 have been defined and studied by considering not only the number of
 vertices or links that have to fail but also how the resultant graph by the
 disruption is (see, for instance, \cite{GYZ13}). In a system operating in real-time, 
it is imperative to restrict information delays, as any message received
 beyond the set limit may become obsolete. This prompts the query of
 determining or approximating the number of routes that ensure effective
 transmission of information within the required timeframe. These are only
 some examples of real problems which are treated through graphs by
 measuring distances, computing disjoint paths or minimizing vertex or edge
 disruptions. There are important contributions to the behavior of networks
 modeled by undirected and non-weighted graphs, but the results are more
 limited when the vertices and/or the links have different relative importance
 and therefore, they need to be degraded by assigning membership values. In
 such cases, fuzzy graph theory plays a decisive role.

Rosenfeld~\cite{R75} pioneered the development of fuzzy equivalents for fundamental 
graph-theoretic concepts like bridges, paths, cycles, trees, and connectivity, elucidating
 various associated properties.  Since then, a lot of work has continued studying graph
 parameters in fuzzy graphs. So, we find contributions, among others, 
in connectivity~\cite{AMMR18, JMM22, MS10, MS13, PMM24} and average edge
 connectivity~\cite{MLL23}, coloring~\cite{SPP16, MGP20}, distance~\cite{MM16}, and
 other invariants~\cite{GERHK23, IP23}

Our contribution is related to the well-known problem of Roman domination in graph 
theory, applied to fuzzy graphs. The concept of domination in fuzzy graphs has been 
explored through various approaches. Somasundaram and Somasundaram~\cite{SS98} 
studied domination and total domination in fuzzy graphs using only effective edges. Nagoor 
Gani and Chandrasekaran~\cite{NC06} introduced domination in fuzzy graphs, as the 
number of vertices in a dominating set, by using strong edges. And Manjusha and Sunitha~\cite{MS15}
 defined the domination number of fuzzy graphs by means of the weight of strong edges.  
Later, they used it to study the strong node covering number of a fuzzy graph~\cite{MST19}. 

We will utilize the domination concept proposed by Manjusha and Sunitha to introduce the 
Roman domination number for a fuzzy graph, incorporating the weights of strong edges. 
The introduction  to the notion of Roman domination in graphs traces back to Cockayne et 
al.~\cite{CDHH04}, who drew inspiration from a historical defensive strategy attributed to 
the era of Emperor Constantine I The Great. (refer to ~\cite{S99}). In this strategy, during 
sudden attacks, each vulnerable location within the Roman Empire was required to have a 
fortified neighbor, capable of deploying two legions, ensuring that the stronger city could 
send reinforcements to defend the besieged location without compromising its own 
defense.

This same approach can be useful in problems such as searching for the best place to locate 
a public service or a business, but, until now, Roman domination has only been applied, with 
its variants, to graphs with unweighted vertices and edges. In real networks, not all points or 
all connections are equally important, which suggests working with weighted graphs, where 
a membership value is assigned to each vertex and each edge according to its importance. 

In this work, we introduce the {\it strong-neighbors Roman domination number} of a fuzzy 
graph and prove some relationships with other well-known
domination parameters. We prove some bounds, we determine the value of the strong-
neighbors Roman domination number for specific fuzzy graphs and 
also we characterize the fuzzy graphs for which the extreme values are attained. 

\section{Definitions and terminologies}

In graph theory, a graph G is defined as a pairing $(V, E)$, where $V$ denotes a collection of 
vertices and $E$ represents a set of unsorted vertex pairs, known as edges. Introducing a 
novel dimension, a fuzzy graph $G = (V,\sigma,\mu)$ emerges as a triplet, comprising a non-
empty $V$ set along with a duo of membership functions $\sigma:V\longrightarrow [0, 1]$ 
and $\mu: V\times V \longrightarrow [0, 1]$. In this construct, $\mu(x, y)$ represents, at 
most, the minimal value among the membership values attributed to its extreme vertices, 
ensuring that  $\mu(x,y) \le  \sigma(x) \wedge \sigma(y)$ for all $x,y \in V$. It is important 
to note that $V$ is conventionally treated as a finite set unless stated otherwise. Within this 
framework, $\sigma$ is denoted as the fuzzy vertex set of $G$, while $\mu$ represents the 
fuzzy edge set. It is evident that $\mu$ acts as a fuzzy relation on $\sigma$. Consequently, 
for simplicity in notation, $G$ or $(\sigma, \mu)$ is employed to denote the fuzzy graph 
$G : (V, \sigma,\mu)$. Additionally, we denote the underlying graph by $G^*:(\sigma^*,
\mu^*)$, where $\sigma^*=\{u\in V:\sigma(u)>0\}$ and $\mu^*=\{(u,v)\in V\times V:
\mu(u,v)>0\}$.

Consider two vertices, $u$ and $v$, within a fuzzy graph $G : (V,\sigma,\mu)$. Should 
$\mu(u,v)$ exceed zero, we refer to $u$ and $v$ as {\it adjacent} or {\it neighbors}. Within 
$G$, a path $P$ linking $u$ and $v$, known as a $u-v$ path, consists of a sequence of 
vertices $u=x_0, x_1, \ldots , x_n=v$ where $\mu(x_{i-1}x_i)>0$, for $i = 1, \ldots , n$, 
defining $n$ as the length of the path. These consecutive pairs represent the edges of the 
path. The {\it strength} of $P$, denoted as $s(P)$, is defined as $\wedge{\mu(x_{i-1},x_i):i=1,\ldots,n}$, 
signifying the weight of the weakest edge. Furthermore, the {\it strength of connectedness} 
between vertices $u$ and $v$ within $G$ is established as the maximum strength among all 
$u-v$ paths, denoted as ${CONN}_G(u,v)$. A {\it strongest $u-v$ path} is a path with 
strength ${CONN}_G(u,v)$. A fuzzy graph $G$ is connected if for every $x$, $y$ in 
$\sigma^*$, ${CONN}_G(u,v)>0$. If $u$ and $v$ are neighbors, we call the edge $(u,v)$ {\it 
strong} if $\mu(u,v)= {CONN}_G(u,v)$. In such a case, we say that $u$ and $v$ are strong 
neighbors. The set of all neighbors of a vertex $u$ is denoted by $N(u)$, whereas the set of 
strong neighbors of $u$ is denoted by $N_s(u)$ and it is called the {\it strong neighborhood 
of $u$}. The {\it closed strong neighborhood of $u$} is $N_s[u]=N_s(u)\cup \{u\}$. The {\it 
degree} and the {\it strong degree} of a vertex $u$ are, respectively, $d(u)=\sum_{v\in 
N(u)}\mu(u,v)$ and $d_s(u)=\sum_{v\in N_s(u)}\mu(u,v)$. That is, the degree of $u$ is the 
sum of the membership values of all edges incident at $u$, whereas the strong degree is 
defined by computing only the membership values of all strong edges incident at $u$. The 
{\it minimum strong degree} and the {\it maximum strong degree} of $G$ are $\delta_s(G)=
\wedge\{d_s(u):u\in V\}$ and $\Delta_s(G)=\vee\{d_s(u):u\in V\}$, respectively. The {\it 
strong neighborhood degree} of a vertex $u$ is defined as $d_{SN}(u)=\sum_{v\in 
N_s(u)}\sigma(v)$. The {\it minimum strong neighborhood degree} of $G$ is $\delta_{SN}
(G)=\wedge\{d_{SN}(u):u\in V\}$ and the {\it maximum strong neighborhood degree} of $G
$ is $\Delta_{SN}(G)=\vee\{d_{SN}(u):u\in V\}$. 
Let us denote by $\mu_s(u)=\wedge \{\mu(u,v):v\in N_s(u)\}$.
An edge $uv$ is called {\it effective} or {\it $M$-strong} if $\mu(u,v)=\sigma(u)\wedge 
\sigma(v)$. In such a case, $u$ and $v$ are effective neighbors. A fuzzy graph $G : (V,
\sigma,\mu)$ with  $|\sigma^*|=n$ is called a {\it complete fuzzy graph}, and it is denoted 
by $K_n$, if $\mu(u,v)=\sigma(u)\wedge \sigma(v)$, for every $u,v\in V$, that is, if every 
two vertices of $G$ are effective neighbors. A fuzzy graph $G = (V,\sigma,\mu)$ is said to be 
{\it bipartite} if $V$ can be partitioned into two nonempty subsets,  $X$ and $Y$, so that 
$\mu(u,v)=0$ if either  $u,v\in X$ or $u,v\in Y$. Further, if $\mu(u,v)=\sigma(u)\wedge 
\sigma(v)$, for every  $u\in X$ and $v\in Y$, then $G$ is called a {\it complete bipartite 
graph} and is usually denoted by $K_{\sigma_1,\sigma_2}$, where  $\sigma_1$ and 
$\sigma_2$ are the restrictions of $\sigma$ to $X$ and $Y$, respectively. When $\mu(u,v)=
0$ for all $v\in V\setminus\{u\}$, we say that $u$ is an {\it isolated vertex}.

The \emph{order} and \emph{size} of a fuzzy graph $G : (V,\sigma,\mu)$ are determined by 
$p=\sum_{x\in V}\sigma(x)$ and $q=\sum_{(x,y)\in V\times V}\mu(xy)$, correspondingly. 
For any subset $S\subseteq V$, the number of elements of $S$ is denoted by $|S|$, and the 
scalar quantity $|S|_s$ is calculated as $|S|_s=\sum_{u\in S}\sigma(u)$. The 
\emph{complement of $G$} is $\overline{G}=(V,\sigma,\overline{\mu})$, where 
$\overline{\mu}(x,y)=\sigma(x)\wedge \sigma(y)-\mu(x,y)$ for all $x,y\in V$. It is worth 
noting that the complement of a complete fuzzy graph results in a fuzzy graph wherein each 
vertex stands independently.

We denote by $\mu_{\min}=\mu_{\min}(G)=\min\{\mu(u,v):v\in N_s(u),u\in V\}$ and 
$\mu_{\max}=\mu_{\max}(G)=\max\{\mu(u,v):v\in N_s(u),u\in V\}$ the minimum and the 
maximum weight of a strong edge in $G$, respectively, and by $\overline{\mu}_{\min}=
\mu_{\min}(\overline{G})$ and $\overline{\mu}_{\max}=\mu_{\max}(\overline{G})$.

In graph theory, given a graph $G:(V,E)$, every vertex $v\in V$ dominates itself and each of 
its neighbors. So, a subset $S\subseteq V$ is a dominating set of $G$ if every vertex of $V-S
$ is neighbor of some vertex of $S$. The minimum cardinality of a dominating set of $G$ is 
called the {\it domination number of $G$} and it is denoted by $\gamma(G)$. One of the 
most important variants of domination in graphs is the {\it Roman domination},  introduced 
by Cockayne et al.~\cite{CDHH04}. In a graph $G : (V,E)$, a {\it Roman dominating function} 
is a mapping $f: V \longrightarrow \{0,1,2\}$ that satisfies the condition where each vertex 
$u$ with $f(u) = 0$ must be adjacent to at least one vertex $v$ such that $f(v) = 2$. The 
total weight of a Roman dominating function is denoted by $w(f) = \sum_{u\in V} f(u)$. The 
smallest weight among all Roman dominating functions on a graph $G$ is termed the {\it 
Roman domination number} of $G$, $\gamma_R(G)$.

In fuzzy graph theory, domination in graphs was firstly extended to fuzzy graphs by Nagoor 
Gani and Chandrasekaran~\cite{NC06} using strong edges, as follows. Given a fuzzy graph 
$G:(V,\sigma,\mu)$, every vertex $v\in V$ strongly dominates itself and each of its strong 
neighbors. So, a subset $D\subseteq V$ is a strong dominating set of $G$ if every vertex of 
$V-D$ is a strong neighbor of some vertex of $D$. Nagoor Gani and Vijayalakshmi~
\cite{NV11} defined the strong dominating number of a fuzzy graph $G$ as the scalar 
cardinality of a strong dominating set of $G$ with minimum number of vertices. As 
Manjusha and Sunitha said in~\cite{MS15}, in fuzzy location problems in networks, where 
strong domination can be applied, the membership values of strong edges in fuzzy graphs 
give more optimum results for strong domination number than using membership values of 
vertices. They proposed in~\cite{MS15} an analogous definition to that of Nagoor Gani and 
Vijayalakshmi~\cite{NV11} considering the minimum weight of all strong edges incident in 
each vertex of the dominating set.

\begin{definition}\label{dom}
    (See~\cite{MS15}) The weight of a strong dominating set $D$ is defined as $W(D)=\sum_{u\in D}\mu_s(u)$. 
    The strong dominating number of a fuzzy graph $G$ is defined as the minimum weight among all the strong
    dominating sets of $G$ and it is denoted by $\gamma_s(G)$. A minimum strong dominating set in a fuzzy 
    graph $G$ is a strong dominating set having minimum weight.
\end{definition}

 We introduce a Roman-type analogous concept to the one given by Definition~\ref{dom}, called 
 {\it strong-neighbors Roman domination number} of a fuzzy graph, by considering the criterion 
 of Manjusha and Sunitha~\cite{MS15}.  

\begin{definition}\label{Romdom}
    Let $G:(V,\sigma,\mu)$ a fuzzy graph. A strong-neighbors Roman dominating function (SNRDF) on $G$ 
    is defined as a function $f:V\rightarrow \{0,1,2\}$ satisfying the condition that every vertex $u\in V$ 
    for which $f(u) = 0$ has at least one strong neighbor $v\in N_s(u)$ for which $f(v) = 2$. The weight 
    of a SNRDF is the value $w(f)=\sum_{u\in V}f(u)\mu_s(u)$. The strong-neighbors Roman domination number 
    of $G$, denoted by $\gamma_{snR}(G)$, is the minimum weight among all possible SNRDFs on $G$.
\end{definition}

Associated to a SNRDF $f$ on a fuzzy graph $G:(V,\sigma,\mu)$ we can construct a partition of $V$ into 
the sets $(V_0,V_1,V_2)$, where $V_i=\{u\in V:f(u)=i\}$, for $i=0,1,2$. There is a bijective correspondence 
between $f$ and its associated partition. Thus, in what follows, whenever we consider an SNRDF $f$, 
we will refer to its associated partition $(V_0,V_1,V_2)$. An SNRDF on $G$ with weight $\gamma_{snR}$ 
will be called a $\gamma_{snR}$-function on $G$ or a $\gamma_{snR}(G)$-function. 

\section{General results on strong-neighbors Roman domination number}
We first find a relationship between the strong-neighbors Roman and the strong domination numbers.

\begin{theorem}\label{romcotgen}
    For every fuzzy graph $G:(V,\sigma,\mu)$, it follows that $\gamma_s(G)\le \gamma_{snR}(G)\le 2\gamma_s(G)$.
\end{theorem}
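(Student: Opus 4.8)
The plan is to adapt the classical chain $\gamma(G)\le\gamma_R(G)\le 2\gamma(G)$ to the weighted strong-neighbor setting by exhibiting explicit witnesses for each of the two inequalities: for the upper bound I would manufacture an SNRDF out of a minimum strong dominating set, and for the lower bound I would extract a strong dominating set from a $\gamma_{snR}$-function. Two structural facts will be used throughout. First, the strong-neighbor relation is symmetric: since both $\mu$ and $CONN_G(u,v)$ are symmetric, $v\in N_s(u)$ if and only if $u\in N_s(v)$. Second, $\mu_s(u)\ge 0$ for every vertex, since each $\mu(u,v)$ with $v\in N_s(u)$ is positive.

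For the bound $\gamma_{snR}(G)\le 2\gamma_s(G)$, I would start from a minimum strong dominating set $D$, so that $W(D)=\sum_{u\in D}\mu_s(u)=\gamma_s(G)$, and define $f$ by $f(u)=2$ for $u\in D$ and $f(u)=0$ otherwise. Every vertex $u$ with $f(u)=0$ lies in $V\setminus D$ and is therefore a strong neighbor of some $v\in D$; by symmetry $v\in N_s(u)$ with $f(v)=2$, so $f$ is an SNRDF. Its weight is $w(f)=\sum_{u\in D}2\mu_s(u)=2W(D)=2\gamma_s(G)$, which yields the inequality.

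For the bound $\gamma_s(G)\le\gamma_{snR}(G)$, I would take a $\gamma_{snR}$-function $f$ with associated partition $(V_0,V_1,V_2)$ and set $D=V_1\cup V_2$. Each $u\in V\setminus D=V_0$ satisfies $f(u)=0$, hence has a strong neighbor in $V_2\subseteq D$, so $D$ is a strong dominating set. Comparing weights and discarding the nonnegative contribution of the doubled $V_2$-term then gives
\[W(D)=\sum_{u\in V_1}\mu_s(u)+\sum_{u\in V_2}\mu_s(u)\le\sum_{u\in V_1}\mu_s(u)+2\sum_{u\in V_2}\mu_s(u)=w(f)=\gamma_{snR}(G),\]
and since $\gamma_s(G)\le W(D)$ by minimality, the lower bound follows.

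I do not expect a genuine obstacle here; the argument is essentially routine once the right witnesses are chosen. The only points demanding care are the symmetry of the strong-neighbor relation, which is what lets me pass between the statements ``$u$ is dominated by $v\in D$'' and ``$u$ has a strong neighbor valued $2$'', and the sign condition $\mu_s(u)\ge 0$, which is precisely what permits dropping the second $V_2$-sum without increasing the weight. A minor degenerate situation to keep in mind is a vertex with empty strong neighborhood: such a vertex can neither receive value $0$ in an SNRDF nor be strongly dominated from outside $D$, so it must belong to both constructions' distinguished set, which is consistent with the two assignments above.
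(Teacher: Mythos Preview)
Your proposal is correct and follows essentially the same route as the paper: for the upper bound you turn a minimum strong dominating set into an SNRDF by labeling its vertices $2$, and for the lower bound you extract the strong dominating set $V_1\cup V_2$ from a $\gamma_{snR}$-function and compare weights. The paper's proof is identical in substance, only omitting your explicit remarks on the symmetry of the strong-neighbor relation and the nonnegativity of $\mu_s$.
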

\begin{proof}
    First, let us see that $\gamma_s(G)\le \gamma_{snR}(G)$. Let $f$ be a $\gamma_{snR}$-function and 
    consider the associated partition on $V$, $(V_0, V_1, V_2)$. As every vertex of $V_0$ has at least 
    one strong neighbor in $V_2$, we can assure that $D=V_1\cup V_2$ is a strong dominating set on $G$ and therefore, 
    $$
        \begin{array}{rcl}
            \displaystyle \gamma_s(G) \le W(D) & = &  
            \displaystyle \sum_{u\in V_1}\mu_s(u)+\sum_{u\in V_2}\mu_s(u) 
                \le  \displaystyle \sum_{u\in V_1}\mu_s(u)+2\sum_{u\in V_2}\mu_s(u) \\[2ex] 
                & = & \displaystyle \sum_{i=0}^2 \sum_{u\in V_i} f(u) \mu_s(u)
                 =  w(f)=\gamma_{snR}(G).
        \end{array}$$
    
    Second, we prove that $\gamma_{snR}(G)\le 2\gamma_s(G)$. Let $D$ be a $\gamma_s(G)$-set of $G$ 
    and consider the function $f:V\rightarrow \{0,1,2\}$ defined as $f(u)=2$, if $u\in D$ and $f(u)=0$, 
    otherwise. Clearly, $f$ is a SNRDF on $G$, yielding that 
    $\displaystyle \gamma_{snR}(G)\le w(f)=\sum_{u\in D}2\mu_s(u)=2W(D)=2\gamma_s(G)$.
\end{proof}

Now, we will approach the families of fuzzy graphs for which $\gamma_s$ and $\gamma_{snR}$ reach the 
same value. Before that, the following remark is necessary.

\begin{remark}\label{Garifue}
 The following assertions hold,
    \begin{enumerate}
        \item [(a)] If $G$ is a fuzzy graph with size $q>0$ then $G$ has some strong edge.
        \item[(b)] For a $\gamma_{snR}$-function such that $V_1$ has the minimum possible cardinality, 
        there are no strong neighbors in $V_1.$ 
    \end{enumerate}
\end{remark}
\begin{proof}
    (a) Observe that $G$ has some edge, because $q>0$. Let $(u,v)$ be the edge of $G$ with the 
    largest weight. If $(u,v)$ is not strong, then there exists a strongest $u-v$ path $P$ 
    with strength $s(P)>\mu(u,v)$. This means that the weight of every edge of $P$ is greater 
    than $\mu(u,v)$, which contradicts the fact that $(u,v)$ is the edge of $G$ with the largest 
    weight.
    
    (b) Let $f$ be a $\gamma_{snR}$-function such that $|V_1|$ is minimum. 
    We reason by contradiction supposing that $(u,v)$ is an edge joining two strong neighbors of $V_1$. 
    Without loss of generality, we may assume that $\mu_s(u) \le \mu_s(v)$. Then we may define a 
    new function $g$ as follows: $g(u)=2, g(v)=0$ and $g(z)=f(z)$ for all $z\in V\setminus\{u,v\}$. 
    The vertex $u$ is dominated by $v$ and so $g$ is a strong-neighbors Roman domination function 
    with $w(g)=w(f)+\mu_s(u)-\mu_s(v)\le w(f)$ but having less number of vertices with a 1 label, 
    which is a contradiction.
\end{proof}

A strong dominating set $D$ on a fuzzy graph $G:(V,\sigma,\mu)$ is called a {\it minimal strong 
dominating set} if no proper subset of $D$ is a strong dominating set on $G$. The following theorem, 
proved in~\cite{NC06}, gives two properties that characterize a strong dominating set on a fuzzy 
graph as minimal.

\begin{theorem}\label{dommal}
(See~\cite{NC06}) Let $G:(V,\sigma,\mu)$ be a fuzzy graph. Then a dominating set $D$ on $G$ is 
minimal if and only if every $u\in D$ satisfies at least one of these two conditions,
\begin{itemize}
\item[1.] $u$ is not a strong neighbor of any vertex in $D$.
\item[2.] There is a vertex $v\in V-D$ such that $N_s(v)\cap D=\{u\}$.
\end{itemize}
\end{theorem}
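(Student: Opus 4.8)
The plan is to prove both implications of the characterization, following the standard strategy used for minimal dominating sets in (non-fuzzy) graph theory, but adapted to the strong-neighborhood setting. Recall that $D$ is minimal precisely when removing any single vertex $u$ from $D$ destroys the strong-domination property, so the whole argument reduces to analyzing, for a fixed $u \in D$, exactly when $D \setminus \{u\}$ fails to be a strong dominating set.

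First I would prove the forward direction by contrapositive: assume some $u \in D$ satisfies \emph{neither} condition~1 nor condition~2, and show that $D$ is not minimal. Negating condition~1 means $u$ \emph{is} a strong neighbor of some vertex in $D$, so $u$ itself is strongly dominated by another element of $D$. Negating condition~2 means that every vertex $v \in V - D$ with $u \in N_s(v)$ has $|N_s(v) \cap D| \ge 2$, i.e.\ each such $v$ has at least one other strong dominator in $D$ besides $u$. The key step is to verify that under these two conditions $D' = D \setminus \{u\}$ is still a strong dominating set: each vertex formerly strongly dominated by $u$ — whether it lies in $V - D$ or equals $u$ — retains a strong dominator in $D'$, so every vertex of $V - D'$ is a strong neighbor of some vertex of $D'$. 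This contradicts minimality, completing the contrapositive.

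For the converse I would again argue by contrapositive: assume $D$ is not minimal and produce a vertex $u \in D$ violating both conditions. Non-minimality gives a proper subset that is still a strong dominating set, and by removing vertices one at a time I may assume $D \setminus \{u\}$ is a strong dominating set for some $u \in D$. Since $D \setminus \{u\}$ strongly dominates everything, in particular it strongly dominates $u$, so $u$ has a strong neighbor in $D \setminus \{u\} \subseteq D$; this is exactly the failure of condition~1. Moreover, for any $v \in V - D$ we have $v \in V - (D \setminus \{u\})$ as well, so $v$ is strongly dominated by some vertex of $D \setminus \{u\}$, giving $N_s(v) \cap (D \setminus \{u\}) \neq \emptyset$ and hence $N_s(v) \cap D \neq \{u\}$; this is the failure of condition~2. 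Thus $u$ violates both conditions, as required.

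The one point demanding care — the main obstacle — is the convention about whether a vertex strongly dominates \emph{itself}. In the strong-domination definition given in the excerpt, each vertex $v$ strongly dominates itself and its strong neighbors, so when checking that $D \setminus \{u\}$ dominates $u$ I must ensure the relevant witness is a genuine \emph{strong neighbor} of $u$ (condition~1 is phrased in terms of $N_s$, not $N_s[\,\cdot\,]$), and when re-dominating the vertices of $V - D$ previously covered by $u$ I must confirm the replacement dominator is distinct from $u$ itself. Keeping the self-domination and strong-neighbor bookkeeping consistent between the two directions is where a sloppy argument could silently break; everything else is routine set-membership chasing.
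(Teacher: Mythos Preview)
Your argument is correct and is exactly the standard Ore-type proof of this characterization, carried over to the strong-neighborhood setting. Note, however, that the paper does \emph{not} supply its own proof of this theorem: it is stated with the attribution ``(See~\cite{NC06})'' and invoked as a known result, so there is no in-paper argument to compare against. Your proof stands on its own, and the care you flag about self-domination is handled correctly by the paper's definition of a strong dominating set (every vertex of $V-D$ must be a genuine strong neighbor of some vertex of $D$), so no subtlety actually arises.
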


First, we characterize the fuzzy graphs for which the lower bound for $\gamma_{snR}$ given 
in Theorem~\ref{romcotgen} is attained.

\begin{theorem}
    Let $G:(V,\sigma,\mu)$ be a fuzzy graph with $n$ vertices. Then $\gamma_s(G)=\gamma_{snR}(G)$ 
    if and only if $G=\overline{K_n}$.
\end{theorem}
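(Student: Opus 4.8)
The plan is to prove both implications, with all the real work concentrated in the forward direction. Throughout I would exploit two elementary facts that I would state at the outset: the strong-neighbor relation is symmetric (since $(u,v)$ is strong exactly when $(v,u)$ is), so $v\in N_s(u)$ iff $u\in N_s(v)$; and any endpoint of a strong edge $u$ has $\mu_s(u)>0$, because every strong edge carries positive weight.

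For the easy implication, I would assume $G=\overline{K_n}$. Then $\overline{\mu}(x,y)=\sigma(x)\wedge\sigma(y)-\mu(x,y)=0$ for all $x,y$, so $G$ has no edges and every vertex is isolated with $N_s(u)=\emptyset$. No vertex can be strongly dominated by another, which forces every SNRDF to take $f(u)\ge 1$ for all $u$ and forces $V$ to be the only strong dominating set. Minimising $w(f)=\sum_{u\in V}f(u)\mu_s(u)$ over values in $\{1,2\}$ is achieved by $f\equiv 1$, so $\gamma_{snR}(G)=\sum_{u\in V}\mu_s(u)=W(V)=\gamma_s(G)$. I would emphasise that this conclusion is insensitive to whatever value $\mu_s$ takes on isolated vertices, since both parameters reduce to the same sum.

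For the converse I would argue by contradiction: suppose $\gamma_s(G)=\gamma_{snR}(G)$ while $G$ has an edge, that is, $q>0$. By Remark~\ref{Garifue}(a), $G$ then contains a strong edge $(a,b)$, so $\mu_s(a),\mu_s(b)>0$. I would pick a $\gamma_{snR}$-function $f$ with $|V_1|$ minimum and revisit the chain from the proof of Theorem~\ref{romcotgen}:
\[
\gamma_s(G)\le \sum_{u\in V_1}\mu_s(u)+\sum_{u\in V_2}\mu_s(u)\le \sum_{u\in V_1}\mu_s(u)+2\sum_{u\in V_2}\mu_s(u)=w(f)=\gamma_{snR}(G).
\]
Since the two extremes coincide, every inequality is an equality, and the middle one forces $\sum_{u\in V_2}\mu_s(u)=0$, hence $\mu_s(u)=0$ for every $u\in V_2$. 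This localisation of the equality to $V_2$ is the key step. From it I would first deduce $V_0=\emptyset$: if some $u\in V_0$ had a strong neighbor $v\in V_2$, then $u\in N_s(v)$ would give $\mu_s(v)>0$, contradicting $\mu_s(v)=0$. Next, since $\mu_s(a),\mu_s(b)>0$, neither $a$ nor $b$ lies in $V_2$, and as $V_0=\emptyset$ this puts $a,b\in V_1$; but $(a,b)$ is a strong edge, so $a$ and $b$ are strong neighbors inside $V_1$, contradicting Remark~\ref{Garifue}(b). Hence $G$ has no edge, i.e. $G=\overline{K_n}$.

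The main obstacle is isolating the correct consequence of equality in Theorem~\ref{romcotgen}, namely that $\mu_s$ vanishes on all of $V_2$; once this is in hand, the symmetry of strong neighborhoods forces $V_0$ to be empty, and the minimal-$|V_1|$ structure from Remark~\ref{Garifue}(b) delivers the contradiction with essentially no computation. I would take care to invoke Remark~\ref{Garifue}(b) for a $\gamma_{snR}$-function chosen with $|V_1|$ minimum, so that the conclusion ``no strong neighbors in $V_1$'' is available for the same $f$ used in the inequality chain.
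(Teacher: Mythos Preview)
Your proof is correct and follows essentially the same route as the paper: both derive $\sum_{u\in V_2}\mu_s(u)=0$ from equality in the chain of Theorem~\ref{romcotgen}, conclude $V_0=\emptyset$, and then invoke Remark~\ref{Garifue}(b) to rule out strong neighbors in $V_1$. You are in fact slightly more careful than the paper in explicitly choosing $f$ with $|V_1|$ minimum before applying Remark~\ref{Garifue}(b).
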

\begin{proof}
    If $G$ has no strong edges then the only strong dominating set $D$ is $V$ and $W(V)=0$. 
    Besides every SNRD function $f$ on $G$ has weight $w(f)=0$. Thus, $\gamma_s(G)=0=\gamma_{snR}(G)$. 

    Assume that $\gamma_s(G)=\gamma_{snR}(G)$. According to Remark~\ref{Garifue}, we only must 
    show that $G$ has no strong edges. Let $f$ be a $\gamma_{snR}$-function on $G$ and consider 
    the associated partition $(V_0,V_1,V_2)$ of $V$. As every vertex $u$ with $f(u)=0$ must have 
    a strong neighbor $v$ with $f(v)=2$, we can assure that $V_1\cup V_2$ is a strong dominating 
    set of $G$. Then 
    $$
    \begin{array}{rcl}
        \displaystyle \gamma_{snR}(G)=\gamma_s(G) & \le & W(V_1\cup V_2) 
                         =  \displaystyle \sum_{u\in V_1}\mu_s(u)+\sum_{u\in V_2} \mu_s(u)  \\[2ex] 
                        & \le & \displaystyle \sum_{u\in V_1}\mu_s(u)+2\sum_{u\in V_2}\mu_s(u)
                         =  \displaystyle \sum_{i=0}^2 \sum_{u\in V_i}f(u)\mu_s(u) 
                         =  w(f)= \gamma_{snR}(G),
    \end{array}
    $$ 
    following that 
    \begin{equation}\label{V2ais} 
        \displaystyle \sum_{u\in V_2} \mu_s(u)=0.
    \end{equation} 
    
    This means that $N_s(u)=\emptyset$, for every vertex $u\in V_2$ and therefore, $V_0=\emptyset$. 
    As $\gamma_s(G)=\gamma_{snR}(G)$, from~(\ref{V2ais}) it follows that $
    \displaystyle \gamma_{snR}(G)=\sum_{u\in V_1}\mu_s(u)=\gamma_s(G).$ 
    
By item (b) of Remark~\ref{Garifue}, there are no strong neighbors in $V_1$. Thus,  $G=\overline{K_n}$.    
\end{proof}

The next theorem also provides an upper and a lower bounds for $\gamma_{snR}$ in terms of the values of the fuzzy 
vertex set $\sigma$ and the minimum membership value of the strong edges.

\begin{theorem}\label{romcot}
    Let $G:(V,\sigma,\mu)$ be a fuzzy graph of order $p$. Then 
    $$\displaystyle 2\cdot \wedge\{\mu(u,v):u,v\in V\} \le 
        \gamma_{snR}(G)\le p-\max_{v\in V}\left\{d_{SN}(v)-\sigma(v)\right\}.$$ 
\end{theorem}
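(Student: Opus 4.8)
The plan is to treat the two inequalities separately, since they rely on different ideas. Throughout, write $m=\wedge\{\mu(u,v):u,v\in V\}$ for the quantity in the lower bound, and record the elementary but crucial inequality $\mu_s(u)\le \sigma(u)$ for every $u\in V$: if $u$ has a strong neighbor $w$ then $\mu_s(u)\le \mu(u,w)\le \sigma(u)\wedge\sigma(w)\le \sigma(u)$, while if $N_s(u)=\emptyset$ then $\mu_s(u)=0\le\sigma(u)$. This is the bridge converting the $\mu_s$-weights appearing in $w(f)$ into the $\sigma$-weights appearing in the bound.

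For the lower bound, I would first note that it is trivial when $m=0$, since every summand of $w(f)$ is nonnegative. So assume $m>0$; this forces every pair of distinct vertices to be adjacent, whence every vertex has an incident edge. I would then observe, exactly as in Remark~\ref{Garifue}(a) but localized at a single vertex, that the heaviest edge incident with any vertex $u$ must be strong (otherwise a strictly stronger $u$--$w$ path would begin at $u$ with an edge heavier than the heaviest incident edge). Hence $N_s(u)\ne\emptyset$ and $\mu_s(u)\ge m$ for every $u$. Now take any SNRDF $f$. If $V_2\ne\emptyset$, choosing $u\in V_2$ gives $w(f)\ge f(u)\mu_s(u)=2\mu_s(u)\ge 2m$. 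If $V_2=\emptyset$, then no vertex can carry the label $0$ (it would lack a strong neighbor labeled $2$), so $V=V_1$; as the existence of a positive weight forces $|V|\ge 2$, we get $w(f)=\sum_{x\in V}\mu_s(x)\ge |V|\,m\ge 2m$. Either way $\gamma_{snR}(G)\ge 2m$.

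For the upper bound, I would exhibit a single SNRDF attaining the claimed value. Let $v^\ast$ be a vertex maximizing $d_{SN}(v)-\sigma(v)$, and define $f(v^\ast)=2$, $f(x)=0$ for every $x\in N_s(v^\ast)$, and $f(x)=1$ for all remaining vertices. Every $0$-labeled vertex lies in $N_s(v^\ast)$ and hence has the $2$-labeled strong neighbor $v^\ast$ (strong adjacency being symmetric), so $f$ is a legitimate SNRDF. Applying $\mu_s\le\sigma$ termwise, the estimate reads
\begin{align*}
\gamma_{snR}(G)\le w(f) &= 2\mu_s(v^\ast)+\sum_{x\notin N_s[v^\ast]}\mu_s(x)\\
&\le 2\sigma(v^\ast)+\sum_{x\notin N_s[v^\ast]}\sigma(x)\\
&= 2\sigma(v^\ast)+\bigl(p-\sigma(v^\ast)-d_{SN}(v^\ast)\bigr)\\
&= p-\bigl(d_{SN}(v^\ast)-\sigma(v^\ast)\bigr),
\end{align*}
which equals $p-\max_{v\in V}\{d_{SN}(v)-\sigma(v)\}$ by the choice of $v^\ast$. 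This computation remains valid when $N_s(v^\ast)=\emptyset$ (for instance when $G=\overline{K_n}$), since then $d_{SN}(v^\ast)=0$, the $0$-labeled set is empty, and the sum simply runs over all $x\ne v^\ast$.

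The routine parts are the counting identity $\sum_{x\notin N_s[v^\ast]}\sigma(x)=p-\sigma(v^\ast)-d_{SN}(v^\ast)$ and the termwise estimate $\mu_s\le\sigma$. The only genuinely delicate point is the lower bound: one must recognize that it is vacuous unless $G^\ast$ is complete, and then argue that completeness forces every vertex to possess a strong neighbor, so that $\mu_s(u)\ge m$ is actually available for the vertex carrying the label $2$. Treating the degenerate case $V_2=\emptyset$ separately --- where the domination condition collapses to $V=V_1$ --- is precisely what secures the factor $2$ rather than a single $m$.
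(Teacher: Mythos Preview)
Your proof is correct and follows essentially the same approach as the paper: the upper bound via the SNRDF $f_{v^\ast}$ (label $v^\ast$ with $2$, its strong neighbors with $0$, the rest with $1$) together with the estimate $\mu_s\le\sigma$ is exactly the paper's argument. For the lower bound your treatment is in fact more careful than the paper's---the paper simply asserts $V_1\cup V_2\ne\emptyset\Rightarrow w(f)\ge 2m$ without justifying the factor $2$, whereas you correctly split into the cases $V_2\ne\emptyset$ and $V_2=\emptyset$ (forcing $V=V_1$ with $|V|\ge 2$) and you explicitly verify that $m>0$ makes every vertex carry a strong edge so that $\mu_s(u)\ge m$ is actually available.
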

\begin{proof}
    The first inequality holds if $\wedge\{\mu(u,v):u,v\in V\}=0$. So, assume that $q>0$ and let $x,y\in V$ 
    be such that $0<\mu(x,y)=\wedge\{\mu(u,v):u,v\in V\}$. Consider any $\gamma_{snR}$-function $f$ with partition 
    $(V_0,V_1,V_2)$ on $V$. Since by Remark~\ref{Garifue} there are strong neighbors in $G$, we deduce that 
    $V_1\cup V_2\ne \emptyset$ and therefore, $\gamma_{snR}(G)=w(f)\ge 2\cdot \wedge \{\mu(u,v):u,v\in V\}$.   
    
    To prove the another inequality, for any vertex $v\in V$ let us consider the function $f_v:V\rightarrow \{0,1,2\}$ 
    defined as $f_v(v)=2$, $f_v(u)=0$ if $u\in N_s(v)$, and $f_v(u)=1$ otherwise. Clearly $f_v$ is a SNRDF on $G$ and 
    $$
    \begin{array}{rcccl} 
        w(f_v) & = & \displaystyle 2 \mu_s(v)+\sum_{u\not\in N_s[v]}\mu_s(u)   
               & \le & \displaystyle 2\sigma(v)+\sum_{u\not\in N_s[v]}\sigma(u)\\[2.75ex] 
               & = & \displaystyle \sigma(v)+p-\sum_{u\in N_s(v)}\sigma(u)       
               & = & p-\left(d_{SN}(v)-\sigma(v)\right).
    \end{array}
    $$ 
    Hence, $\displaystyle \gamma_{snR}\le \min_{v\in V}w(f_v) \le 
    \min_{v\in V} \{p-\left(d_{SN}(v)-\sigma(v)\right)\} =p-\max_{v\in V}\left\{d_{SN}(v)-\sigma(v)\right\}$.
\end{proof}

The lower bound is tight, for example, for a complete fuzzy graph (see Corollary~\ref{complete}). 
Besides, it is not difficult to check that the upper bound is also tight, see the graph depicted in 
Figure~\ref{fig:uppstar}.

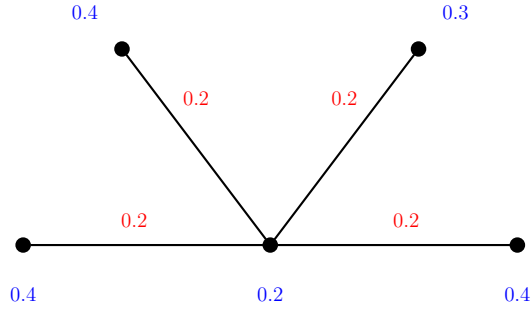
\begin{figure}[ht]
\centering
\begin{tikzpicture}[scale=.65, transform shape, style=thick]
\tikzstyle{vert_black}=[fill=black, draw=black, shape=circle, scale=0.75]
\tikzstyle{none}=[fill=none, draw=none, shape=circle]

            \node [style={vert_black}] (0) at (0, 0) {};
		\node [style={vert_black}] (1) at (-5, 0) {};
		\node [style={vert_black}] (2) at (5, 0) {};
		\node [style={vert_black}] (3) at (3, 4) {};
		\node [style={vert_black}] (4) at (-3, 4) {};
		\node [style=none] (5) at (0, -1) {\large \color{blue} $0.2$};
		\node [style=none] (6) at (5, -1) {\large \color{blue} $0.4$};
		\node [style=none] (7) at (3.75, 4.75) {\large \color{blue} $0.3$};
		\node [style=none] (8) at (-3.75, 4.75) {\large \color{blue} $0.4$};
		\node [style=none] (9) at (-5, -1) {\large \color{blue} $0.4$};
		\node [style=none] (10) at (-1.5, 3) {\large \color{red} $0.2$};
		\node [style=none] (11) at (1.5, 3) {\large \color{red} $0.2$};
		\node [style=none] (12) at (2.75, 0.5) {\large \color{red} $0.2$};
		\node [style=none] (13) at (-2.75, 0.5) {\large \color{red} $0.2$};
  
		\draw (1) to (0);
		\draw (0) to (2);
		\draw (3) to (0);
		\draw (4) to (0);
  
\end{tikzpicture}
    \caption{A graph for which the upper bound is reached.}\label{fig:uppstar}
\end{figure}

The following proposition establishes some particular properties of the partition 
associated to a $\gamma_{snR}$-function. 

\begin{proposition}\label{propiedades}
     Let $G:(V,\sigma,\mu)$ be a fuzzy graph and let $f$ be a $\gamma_{snR}$-function on $G$ with 
     associated partition $(V_0,V_1,V_2)$. Then the following assertions hold,
     \begin{itemize}
         \item[(a)] If $u,v\in V_1$ are strong neighbors, then $\displaystyle \mu_s(u)=\mu_s(v)$.
         \item[(b)] No strong edge of $G$ joins $V_1$ and $V_2$.
         \item[(c)] $V_2$ is a $\gamma_s$-set of $G[V_0\cup V_2]$
     \end{itemize}
\end{proposition}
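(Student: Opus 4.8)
The plan is to dispatch (a) and (b) by local reassignment arguments applied to the optimal function $f$, and to reduce (c) to the optimality of $f$ through a comparison between strong domination in $G$ and in the induced fuzzy subgraph $H:=G[V_0\cup V_2]$.

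For (a) I would argue by contradiction. Suppose $u,v\in V_1$ are strong neighbors and, after possibly interchanging their names, $\mu_s(u)<\mu_s(v)$. Define $g$ by $g(u)=2$, $g(v)=0$, and $g=f$ elsewhere. Since $u\in N_s(v)$ and $g(u)=2$, the vertex $v$ remains dominated, and no other vertex loses a label-$2$ neighbor, so $g$ is an SNRDF; however $w(g)=w(f)+\mu_s(u)-\mu_s(v)<w(f)$, contradicting the minimality of $f$, whence $\mu_s(u)=\mu_s(v)$. Part (b) uses the same mechanism with an even cheaper move: if a strong edge joined $u\in V_1$ to $v\in V_2$, then redefining $g(u)=0$ (and $g=f$ elsewhere) leaves $u$ dominated by $v$ and gives $w(g)=w(f)-\mu_s(u)$. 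Because $v\in N_s(u)$ forces $N_s(u)\neq\emptyset$ and hence $\mu_s(u)>0$, this is a strict decrease, again contradicting optimality.

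For (c) I would first record the one fact about induced subgraphs that I need: for $a,b\in V_0\cup V_2$ the direct edge already gives $CONN_H(a,b)\ge\mu(a,b)$, while deleting vertices cannot create stronger paths, so $CONN_H(a,b)\le CONN_G(a,b)$; consequently every strong edge of $G$ with both ends in $V_0\cup V_2$ remains strong in $H$. This immediately yields that $V_2$ strongly dominates $H$, since each $u\in V_0$ has, by the SNRDF condition, a $G$-strong neighbor $v\in V_2$, and that edge survives as a strong edge of $H$. For the minimality of the weight I would suppose, for contradiction, that some strong dominating set $S$ of $H$ satisfies $W(S)<W(V_2)$, and lift it to $G$ by setting $g\equiv 1$ on $V_1$, $g\equiv 2$ on $S$, and $g\equiv 0$ on $(V_0\cup V_2)\setminus S$; every $0$-labelled vertex then lies in $V_0\cup V_2$ and is dominated by a vertex of $S$, so $g$ should be an SNRDF, the weight contributed on $V_1$ is the same for $f$ and $g$, and the strict inequality $W(S)<W(V_2)$ should propagate to $w(g)<w(f)$, contradicting optimality.

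The step I expect to be the genuine obstacle is the last one, because it forces a careful comparison between the local weights $\mu_s$ and the strong neighborhoods computed in $H$ versus in $G$: a priori an induced subgraph can turn a non-strong edge into a strong one, so an $H$-strong dominating edge need not be $G$-strong, and $\mu_s^H(u)$ need not equal $\mu_s^G(u)$. The lever I would use to close this gap is precisely part (b): for $u\in V_2$ it guarantees $N_s^G(u)\subseteq V_0\cup V_2$, so passing to $H$ loses none of the $G$-strong edges at $u$; combined with the connectedness inequality above, I would argue that the weights relevant to the comparison agree, so that the lifted function $g$ is a legitimate SNRDF on $G$ and the cancellation on $V_1$ goes through. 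Pinning down this consistency between $G$ and $H$, and verifying that the ``new'' strong edges an induced subgraph may create undermine neither the domination nor the weight bookkeeping, is the delicate heart of the argument.
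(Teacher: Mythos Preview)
For (a) and (b) your arguments are exactly the paper's. For (c) the paper follows precisely the lifting strategy you outline: assume a strong dominating set $V_2'$ of $G[V_0\cup V_2]$ with $W(V_2')<W(V_2)$, define $g\equiv 2$ on $V_2'$, $g\equiv 1$ on $V_1$, $g\equiv 0$ elsewhere, and read off $w(g)=2W(V_2')+W(V_1)<2W(V_2)+W(V_1)=w(f)$. The paper does this in two sentences and never distinguishes between strongness or $\mu_s$ computed in $G$ versus in $H=G[V_0\cup V_2]$; everything is implicitly computed in the ambient graph $G$, and under that reading the argument is complete as written.

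The subtlety you flag is genuine if one reads (c) intrinsically in $H$, but your proposed lever via (b) does not close it. Part (b) only controls vertices of $V_2$: it gives $N_s^G(u)\subseteq V_0\cup V_2$ for $u\in V_2$, but a vertex of $S\cap V_0$ may have $G$-strong neighbors in $V_1$, so $\mu_s^G$ and $\mu_s^H$ can differ there and the weight comparison need not transfer. Worse, in the domination direction an $H$-strong edge from $S$ to a $0$-labelled vertex need not be $G$-strong at all (removing $V_1$ can promote a weak edge to strong), so the lifted $g$ need not be an SNRDF on $G$. The paper simply sidesteps this by never passing to $H$; your core argument matches it, and the extra care you propose goes beyond what the paper supplies, but would require more than (b) to complete.
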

\begin{proof}
    (a) We reason by contradiction assuming that $\displaystyle \mu_s(u)<\mu_s(v)$ and consider the function 
    $g:V\rightarrow \{0,1,2\}$ defined as $g(u)=2$, $g(v)=0$, and $g(z)=f(z)$ for every $z\in V\setminus\{u,v\}$. 
    Clearly, $g$ is a SNRDF on $G$ and its weight is 
    $w(g) = w(f)-\left(\mu_s(u)+\mu_s(v)\right)+2 \mu_s(u)= w(f)+\mu_s(u)-\mu_s(v)<w(f)$, which is a contradiction 
    to the fact that $f$ is a $\gamma_{snR}$-function.   

    (b) Suppose that a strong edge joins $u\in V_1$ and $v\in V_2$. Then by reassigning $f(u) = 0$ and keeping 
    all other values of $f$ to be the same, we find a new SNRDF on $G$ with a smaller weight than the weight 
    of $f$, a contradiction.

    (c) As each vertex $u\in V_0$ has at least one strong neighbor $v\in V_2$, it is clear that $V_2$ is a strong 
    dominating set of the subgraph induced by $V_0\cup V_2$, say $G[V_0\cup V_2]$. If $V_2$ is not a $\gamma_s$-set 
    of $G[V_0\cup V_2]$ then there exists a dominating set in $G[V_0\cup V_2]$, $V'_2\subseteq V_0\cup V_2,$ such 
    that $W(V'_2)<W(V_2)$. But in this case, the function $g:V\rightarrow \{0,1,2\}$ defined as $g(v)=2$ if 
    $v\in V'_2$, $g(v)=0$ if $v\in (V_0\cup V_2)\setminus V'_2$, and $g(v)=1$ otherwise is a SNRDF on $G$ with 
    partition $\left((V_0\cup V_2)\setminus V'_2,V_1,V'_2\right)$ and its weight is 
    $w(g)=2W(V'_2)+W(V_1)<2W(V_2)+W(V_1)=w(f)$, again a contradiction, because $f$ is a $\gamma_{snR}$-function.  
\end{proof}

Let us denote by $\mathcal{U}_s(G)=\{v\in V: N_s[v]=V\}$ the set of universal vertices in $G$.
\begin{proposition}\label{universal}
    Let $G:(V,\sigma,\mu)$ be a fuzzy graph such that $\mathcal{U}_s(G)\ne\emptyset$. Then 
    $$ \gamma_{snR}(G)=2\min\{\mu_s(v):v\in \mathcal{U}_s(G)\}.$$
\end{proposition}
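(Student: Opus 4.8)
The plan is to prove the two inequalities separately. Throughout, set $m=\min\{\mu_s(v):v\in\mathcal{U}_s(G)\}$ and fix a universal vertex $w\in\mathcal{U}_s(G)$ attaining it, so that $\mu_s(w)=m$ and $N_s[w]=V$. We may assume $|V|\ge 2$, the case $|V|=1$ being degenerate (then $N_s(w)=\emptyset$ and $\mu_s(w)$ is not even defined); note that $|V|\ge 2$ together with universality of $w$ guarantees that $G$ has strong edges.

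For the upper bound $\gamma_{snR}(G)\le 2m$, I would exhibit an explicit SNRDF: define $f(w)=2$ and $f(u)=0$ for every $u\neq w$. Because $N_s[w]=V$, each $u\neq w$ is a strong neighbor of $w$, so every vertex carrying label $0$ has the strong neighbor $w$ with label $2$; thus $f$ is a SNRDF with $w(f)=2\mu_s(w)=2m$, and the bound follows.

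The hard part is the matching lower bound, and the key step is a lemma asserting that $\mu_s(u)\ge m$ for \emph{every} $u\in V$. To prove it, fix any $u\neq w$ and any strong neighbor $x\in N_s(u)$. Since $w$ is universal, $w$ is adjacent to both $u$ and $x$, so the $u$--$x$ path $u,w,x$ exists and has strength $\mu(u,w)\wedge\mu(w,x)$. As $w$ is a strong neighbor of every vertex, $\mu(u,w)\ge\mu_s(w)=m$ and $\mu(w,x)\ge\mu_s(w)=m$, so this path has strength at least $m$; because $ux$ is strong, $\mu(u,x)={CONN}_G(u,x)$, which is at least the strength of any $u$--$x$ path, whence $\mu(u,x)\ge m$. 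Taking the minimum over $x\in N_s(u)$ yields $\mu_s(u)\ge m$, and the case $u=w$ is immediate. I expect the only delicate point to be checking that the detour through $w$ is legitimate — that $w$ is genuinely a strong neighbor of both endpoints and that the connectedness inequality applies — and this rests precisely on the universality of $w$ combined with the definition of a strong edge.

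Finally, for the lower bound I would show $w(f)\ge 2m$ for an arbitrary SNRDF $f$ with partition $(V_0,V_1,V_2)$, distinguishing two cases. If $V_2\neq\emptyset$, pick $u\in V_2$; then $w(f)\ge f(u)\mu_s(u)=2\mu_s(u)\ge 2m$ by the lemma. If $V_2=\emptyset$, then no vertex can be labelled $0$, since it would require a strong neighbor labelled $2$; hence $V_0=\emptyset$ and $V_1=V$, so $w(f)=\sum_{u\in V}\mu_s(u)\ge|V|\,m\ge 2m$, again using the lemma and $|V|\ge 2$. In either case $\gamma_{snR}(G)\ge 2m$, which together with the upper bound gives the claimed equality.
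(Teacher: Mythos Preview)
Your proof is correct and follows essentially the same route as the paper: both establish the key lemma that $\mu_s(v)\ge m$ for every $v\in V$ by routing through the universal vertex $w$, and then use the function assigning $2$ to $w$ and $0$ elsewhere. The only wrinkle is the degenerate case $x=w$ in your lemma (the path $u,w,x$ collapses), but then $\mu(u,x)=\mu(u,w)\ge m$ directly; your lower-bound case analysis is in fact more explicit than the paper's, which simply asserts that the constructed $f$ is a $\gamma_{snR}$-function once the lemma is in hand.
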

\begin{proof}
    Let $u\in \mathcal{U}_s(G)$ be such that $\mu_s(u)=\min\{\mu_s(v):v\in \mathcal{U}_s(G)\}$. Let us see that 
    $\mu_s(u)\le \mu_s(v)$, for all $v\in V$. Suppose, on the contrary, that there exists a vertex $w\in V$ such 
    that $\mu_s(w)< \mu_s(u)$. Then there is a vertex $z\in N_s(w)\setminus\{u\}$ such that $\mu(w,z)=\mu_s(w)$. 
    As $u\in \mathcal{U}_s(G)$ and therefore, $N_s[u]=V$, we deduce that $z\in N_s(u)$. But in that case, we find 
    a $w-z$ path $P:w,u,z$ with strengh $s(P)=\wedge\{\mu(w,u),\mu(u,z)\}\ge \mu_s(u)>\mu_s(w)$, yielding that 
    $\mu(w,z)=\mu_s(w)<s(P)\le {CONN}_G(w,z)$. That is, $(w,z)$ is not a strong edge, a contradiction. Hence, 
    $\mu_s(u)\le \mu_s(v)$, for all $v\in V$ and the function $f:V\rightarrow \{0,1,2\}$ defined as $f(u)=2$, 
    and $f(v)=0$ otherwise is a $\gamma_{snR}$-function with weight 
    $\gamma_{snR}(G)=\omega(f)=2\mu_s(u)=2\min\{\mu_s(v):v\in \mathcal{U}_s(G)\}$.  
\end{proof}

\begin{corollary}\label{complete}
    For a complete fuzzy graph $K_n:(V,\sigma,\mu)$, with $|\sigma^*|=n$, we have that
    $\gamma_{snR}(K_n)=2\min\{\mu_s(v):v\in V\}.$
\end{corollary}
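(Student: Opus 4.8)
The plan is to derive Corollary~\ref{complete} directly from Proposition~\ref{universal}. The essential observation is that in a complete fuzzy graph $K_n:(V,\sigma,\mu)$, every pair of vertices is an effective neighbor, hence adjacent; but adjacency alone is not the same as being a \emph{strong} neighbor, so the first step is to verify that the set of universal vertices $\mathcal{U}_s(K_n)$ is nonempty. In fact I would argue that \emph{every} vertex of $K_n$ is universal, i.e. $N_s[v]=V$ for all $v\in V$, which would let me apply Proposition~\ref{universal} with $\mathcal{U}_s(K_n)=V$ and immediately obtain $\gamma_{snR}(K_n)=2\min\{\mu_s(v):v\in V\}$.

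First I would fix two arbitrary distinct vertices $u,v\in V$ and show the edge $(u,v)$ is strong, that is, $\mu(u,v)={CONN}_{K_n}(u,v)$. Since $K_n$ is complete, $\mu(u,v)=\sigma(u)\wedge\sigma(v)$. For any $u$--$v$ path $P:u=x_0,x_1,\dots,x_k=v$, its strength is the minimum of the weights $\mu(x_{i-1},x_i)=\sigma(x_{i-1})\wedge\sigma(x_i)$ along the path. Because a minimum over a longer chain of $\wedge$-terms can only be smaller, one checks that $s(P)=\bigwedge_{i=0}^{k}\sigma(x_i)\le\sigma(u)\wedge\sigma(v)=\mu(u,v)$. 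Hence no path beats the direct edge, so ${CONN}_{K_n}(u,v)=\mu(u,v)$ and the edge $(u,v)$ is strong. As $u,v$ were arbitrary, every two vertices are strong neighbors, so $N_s[v]=V$ for every $v$ and thus $\mathcal{U}_s(K_n)=V$.

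With $\mathcal{U}_s(K_n)=V$ established, Proposition~\ref{universal} applies verbatim and yields $\gamma_{snR}(K_n)=2\min\{\mu_s(v):v\in\mathcal{U}_s(K_n)\}=2\min\{\mu_s(v):v\in V\}$, which is exactly the claimed formula. I expect the main (and really only) obstacle to be the careful verification that every edge is strong; the telescoping $\wedge$-inequality $s(P)\le\sigma(u)\wedge\sigma(v)$ is the single point where one must be attentive, since one has to rule out the possibility that an indirect path through high-membership intermediate vertices could exceed the direct edge weight. Once that monotonicity of the strength under $\wedge$ is nailed down, the rest is a one-line appeal to the previous proposition.
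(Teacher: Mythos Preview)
Your proposal is correct and follows essentially the same approach as the paper, which simply invokes Proposition~\ref{universal} after asserting that all edges of a complete fuzzy graph are strong. You supply the extra detail of verifying that strength claim via the inequality $s(P)=\bigwedge_{i=0}^{k}\sigma(x_i)\le\sigma(u)\wedge\sigma(v)=\mu(u,v)$, which the paper takes for granted; this is a welcome clarification but not a different route.
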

\begin{proof}
   By appyling Proposition~\ref{universal}, because all 
   the edges of a complete graph are strong.  
\end{proof}

In general, it is not easy to obtain closed formulae for the exact value of the $\gamma_{snR}$ parameter even when
restricted to well-known graphs. When studying the Roman domination of bipartite graphs $K_{2,r}$ the correct strategy
is always to assign the labels $\{1,2\}$ to the vertices of the class with cardinality two. In case of fuzzy complete
bipartite graphs we cannot assure that this would be the optimal choice (see Figure~\ref{fig:bipart}).

\begin{proposition}
    Let $K_{\sigma_1,\sigma_2}:(X\cup Y, \sigma, \mu)$ be a complete bipartite fuzzy graph with $X=\{x_i\}_{i=1}^{\sigma_1}$ such that 
    $\mu_s(x_i)\le \mu_s(x_{i+1})$ and $Y=\{y_j\}_{j=1}^{\sigma_2}$ such that 
    $\mu_s(y_j)\le \mu_s(y_{j+1})$, with $\sigma_1 \le \sigma_2$. 
    The following assertions hold,

    \begin{itemize}
        \item[(i)] If $\sigma_1=1$ then $\displaystyle \gamma_{snR}(K_{\sigma_1,\sigma_2})=2\mu_s(x_1)$.
        \item[(ii)] If $\sigma_1=\sigma_2=2$ then 
        $$ \gamma_{snR}(K_{\sigma_1,\sigma_2})= 
            \left\{
                \begin{array}{ll} 
                4\mu_s(x_1),  & \mbox{if } 2\mu_s(x_1) \le \min\{\mu_s(x_2),\mu_s(y_2)\} \\[2ex] 
                2\mu_s(x_1)+\min\{\mu_s(x_2),\mu_s(y_2)\}, & \mbox{if } 2\mu_s(x_1) \ge \max\{\mu_s(x_2),\mu_s(y_2)\}.
                \end{array}
            \right.
        $$
        \item[(iii)] If $\sigma_1=2$ and $\sigma_2\ge 3$ then 
                 $\gamma_{snR}(K_{\sigma_1,\sigma_2})= \min\{4\mu_s(x_1),2\mu_s(x_1)+\mu_s(x_2)\}$
                 
        \item[(iv)] If $\sigma_1 \ge 3$ then  $\displaystyle \gamma_{snR}(K_{\sigma_1,\sigma_2})=4 \mu_s(x_1)$.
    \end{itemize}
\end{proposition}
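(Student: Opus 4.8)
The plan is to first pin down the strong structure of $K_{\sigma_1,\sigma_2}$ and then reduce the optimisation to a handful of candidate functions. First I would show that \emph{every} edge of a complete bipartite fuzzy graph is strong: for $x_i\in X$ and $y_j\in Y$, any $x_i$--$y_j$ path begins with an edge incident to $x_i$ (of weight at most $\sigma(x_i)$) and ends with one incident to $y_j$ (of weight at most $\sigma(y_j)$), so its strength is at most $\sigma(x_i)\wedge\sigma(y_j)=\mu(x_i,y_j)$; hence $CONN_G(x_i,y_j)=\mu(x_i,y_j)$ and the edge is strong. Consequently $N_s(x_i)=Y$ and $N_s(y_j)=X$, so writing $a=\wedge_i\sigma(x_i)$ and $b=\wedge_j\sigma(y_j)$ we get $\mu_s(x_i)=\sigma(x_i)\wedge b$ and $\mu_s(y_j)=\sigma(y_j)\wedge a$. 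The decisive computation is that the two smallest values coincide:
\[
\mu_s(x_1)=\wedge_i\,\mu_s(x_i)=a\wedge b=\wedge_j\,\mu_s(y_j)=\mu_s(y_1)=:m .
\]
This equality is what allows every formula in the statement to be expressed through $\mu_s(x_1)$ alone.

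Next I would establish, for \emph{any} complete bipartite fuzzy graph, the master formula $\gamma_{snR}=\min\{C_a,C_b,C_c,C_d\}$, where $C_d=4m$ (put a $2$ on $x_1$ and on $y_1$, everything else $0$), $C_b=2m+\sum_{i\ge2}\mu_s(x_i)$ (a $2$ on $x_1$, a $1$ on the rest of $X$, and $0$ on all of $Y$), $C_c=2m+\sum_{j\ge2}\mu_s(y_j)$ (the symmetric choice), and $C_a=\sum_i\mu_s(x_i)+\sum_j\mu_s(y_j)$ (all $1$'s). Each of these four functions is visibly an SNRDF, giving $\gamma_{snR}\le\min\{C_a,C_b,C_c,C_d\}$. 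For the matching lower bound I take a $\gamma_{snR}$-function $f$ with partition $(V_0,V_1,V_2)$ and split according to how $V_2$ meets the two sides: using $N_s(x_i)=Y$ and $N_s(y_j)=X$, a vertex of $X$ may be labelled $0$ only if $V_2\cap Y\neq\emptyset$, and symmetrically. Thus if $V_2=\emptyset$ then $w(f)=C_a$; if $V_2$ meets only $X$ then all of $X$ carries a positive label and $w(f)\ge C_b$; symmetrically $w(f)\ge C_c$; and if $V_2$ meets both sides then $w(f)\ge 2m+2m=C_d$. Hence $w(f)\ge\min\{C_a,C_b,C_c,C_d\}$, which proves the master formula.

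Finally I would specialise, the only ingredient being the elementary bound $\sum_{k\ge2}\mu_s(z_k)\ge 2m$ whenever a class has at least three vertices (each summand is $\ge m$ and there are at least two of them), together with $C_a\ge C_b$ (indeed $C_a-C_b=\sum_{j\ge2}\mu_s(y_j)\ge0$) and its mirror $C_a\ge C_c$. For $\sigma_1=1$ the set $X=\{x_1\}$ consists of a universal vertex, so Proposition~\ref{universal} already gives $2\mu_s(x_1)$ (equivalently $C_b=2m$ is the minimum), which is (i). For $\sigma_1=\sigma_2=2$ one has $C_a\ge\min\{C_b,C_c\}$, so $\gamma_{snR}=\min\{4m,\,2m+\min\{\mu_s(x_2),\mu_s(y_2)\}\}$, and reading off the two regimes $2m\le\min\{\mu_s(x_2),\mu_s(y_2)\}$ and $2m\ge\max\{\mu_s(x_2),\mu_s(y_2)\}$ yields (ii). For $\sigma_1=2,\ \sigma_2\ge3$ the bound gives $C_c\ge 2m+2m=4m\ge C_d$ and $C_a\ge C_b$, leaving $\gamma_{snR}=\min\{C_b,C_d\}=\min\{2\mu_s(x_1)+\mu_s(x_2),\,4\mu_s(x_1)\}$, which is (iii); here $C_b=2m+\mu_s(x_2)$ because $X$ has only the two vertices $x_1,x_2$. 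For $\sigma_1\ge3$ (hence $\sigma_2\ge3$) both $\sum_{i\ge2}\mu_s(x_i)\ge2m$ and $\sum_{j\ge2}\mu_s(y_j)\ge2m$, so $C_a,C_b,C_c\ge 4m=C_d$ and $\gamma_{snR}=4\mu_s(x_1)$, which is (iv).

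I expect the main obstacle to be the lower-bound case analysis in the master formula --- in particular, being careful that a label-$0$ vertex on one side genuinely forces a label-$2$ vertex on the \emph{other} side (this is exactly where $N_s(x_i)=Y$ and $N_s(y_j)=X$ enter) --- and, in the specialisations, the threshold counting that makes $C_c$ (respectively $C_b$) non-optimal precisely when the larger class has at least three vertices.
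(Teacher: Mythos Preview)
Your proof is correct and takes a genuinely different route from the paper's. The paper argues each of (i)--(iv) separately: for (ii)--(iv) it fixes a $\gamma_{snR}$-function with minimum $|V_1|$, invokes Remark~\ref{Garifue}(b) and Proposition~\ref{propiedades}(b) to force $|V_1|\le 1$ and to constrain how $V_1,V_2$ sit across the bipartition, and then bounds $|V_2|$ case by case. You instead prove a single master identity $\gamma_{snR}=\min\{C_a,C_b,C_c,C_d\}$ via a clean four-way split on which sides $V_2$ meets, using only the structural facts $N_s(x_i)=Y$, $N_s(y_j)=X$ and the key equality $\mu_s(x_1)=\mu_s(y_1)=m$ (which you derive explicitly; the paper merely states it). Your lower bound in the ``$V_2$ meets only $X$'' case is the one non-obvious step, and it goes through because every $x_i$ must then carry a positive label and at least one carries a $2$, giving weight at least $\sum_i\mu_s(x_i)+m=C_b$. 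The specialisations are then immediate arithmetic with the threshold observation $\sum_{k\ge 2}\mu_s(z_k)\ge 2m$ once a class has size $\ge 3$. What your approach buys is uniformity and a formula valid in all parameter ranges (in particular it covers the intermediate regime $\min\{\mu_s(x_2),\mu_s(y_2)\}<2m<\max\{\mu_s(x_2),\mu_s(y_2)\}$ left unstated in (ii)); what the paper's approach buys is that it reuses the general machinery (Remark~\ref{Garifue}, Proposition~\ref{propiedades}) developed elsewhere rather than a bespoke case analysis.
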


\begin{figure}[ht]
\centering
\begin{tikzpicture}[scale=.75, transform shape, style=thick]
\tikzstyle{vert_black}=[fill=black, draw=black, shape=circle, scale=0.75]
\tikzstyle{none}=[fill=none, draw=none, shape=circle]
\tikzstyle{vert_dest}=[fill=none, draw=red, shape=circle, scale=2]

		\node [style={vert_black}] (5) at (3, 2) {};
		\node [style={vert_black}] (6) at (3, -3) {};
		\node [style={vert_black}] (7) at (7, -0.5) {};
		\node [style={vert_black}] (8) at (7, 4) {};
		\node [style={vert_black}] (9) at (7, -5) {};
		\node [style=none] (12) at (7.75, 4.5) {\large $0.3$};
		\node [style=none] (13) at (7.75, 0) {\large $0.3$};
		\node [style=none] (14) at (7.75, -4.5) {\large $0.3$};
		\node [style=none] (15) at (2, 2.5) {\large $0.1$};
		\node [style=none] (16) at (2, -2.5) {\large $0.3$};
		\node [style={vert_dest}] (21) at (7, 4) {};
		\node [style={vert_dest}] (22) at (3, 2) {};
		\node [style=none] (23) at (2, 1.25) {{\color{red} $f(x_1)=2$}};
        \node [style=none] (24) at (7.5, 3) {{\color{red} $f(y_1)=2$}};
		\node [style={vert_black}] (25) at (-6, 2) {};
		\node [style={vert_black}] (26) at (-6, -3) {};
		\node [style={vert_black}] (27) at (-2, -0.5) {};
		\node [style={vert_black}] (28) at (-2, 4) {};
		\node [style={vert_black}] (29) at (-2, -5) {};
		\node [style=none] (30) at (-1.25, 4.5) {\large $0.3$};
		\node [style=none] (31) at (-1.25, 0) {\large $0.3$};
		\node [style=none] (32) at (-1.25, -4.5) {\large $0.3$};
		\node [style=none] (33) at (-7, 2.5) {\large $0.1$};
		\node [style=none] (34) at (-7, -2.5) {\large $0.1$};
		\node [style={vert_dest}] (35) at (-6, -3) {};
		\node [style={vert_dest}] (36) at (-6, 2) {};
		\node [style=none] (37) at (-7, 1.25) {{\color{red} $f(x_1)=2$}};
		\node [style=none] (38) at (-7, -4) {{\color{red} $f(x_2)=1$}};
		\node [style=none] (39) at (-5, -6) {{\color{red} \large $w(f)=2\cdot 0.1+0.1=0.3$}};
		\node [style=none] (40) at (5, -6) {{\color{red} \large $w(f)=2\cdot 0.1+2\cdot 0.1=0.4$}};
        \node [style=none] (41) at (-5.25, 2) {{$x_1$}};
        \node [style=none] (42) at (3.75, 2) {{$x_1$}};
        \node [style=none] (43) at (-5.25, -3) {{$x_2$}};
        \node [style=none] (44) at (3.75, -3) {{$x_2$}};
        \node [style=none] (45) at (-2.75, 4) {{$y_1$}};
        \node [style=none] (46) at (-2.75, -0.5) {{$y_2$}};
        \node [style=none] (47) at (-2.75, -5) {{$y_3$}};
        \node [style=none] (48) at (6.25, 4) {{$y_1$}};
        \node [style=none] (49) at (6.25, -0.5) {{$y_2$}};
        \node [style=none] (50) at (6.25, -5) {{$y_3$}};

		\draw (9) to (6);
		\draw (6) to (7);
		\draw (7) to (5);
		\draw (5) to (8);
		\draw (8) to (6);
		\draw (5) to (9);
		\draw (29) to (26);
		\draw (26) to (27);
		\draw (27) to (25);
		\draw (25) to (28);
		\draw (28) to (26);
		\draw (25) to (29);
\end{tikzpicture}
   \vspace{-1.25cm} \caption{Two different strategies depending on the $\mu$ function.}\label{fig:bipart}
\end{figure}
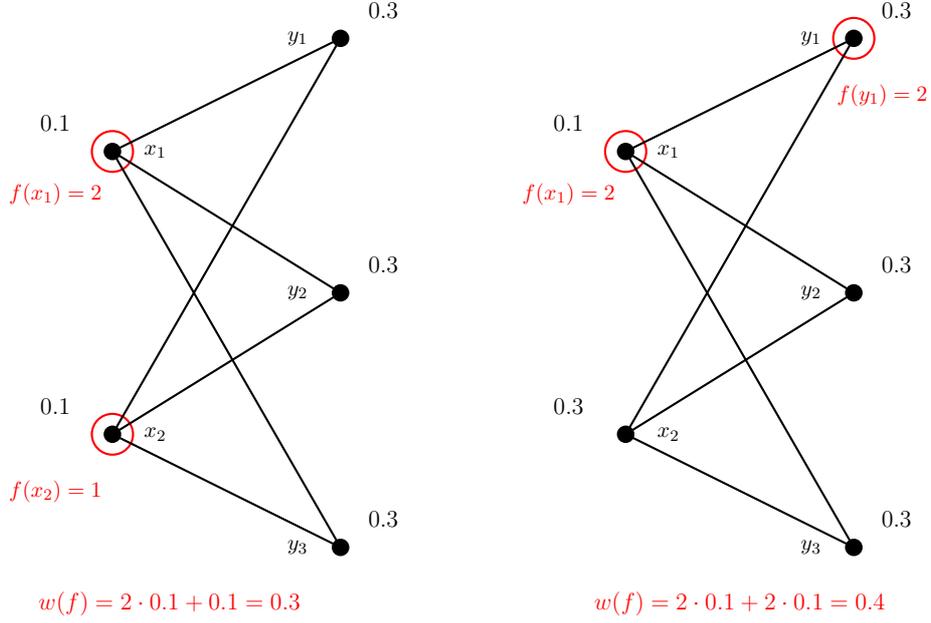

\begin{proof}
    Observe that $\wedge\{\mu(x_i,y_j):i=1,\ldots,\sigma_1,j=1,\ldots,\sigma_2\}=\mu(x_1,y_1)=\mu_s(x_1)=\mu_s(y_1)$.
    
    $(i)$ If $\sigma_1=1$ then $x_1\in \mathcal{U}_s(K_{\sigma_1,\sigma_2})$. By Proposition~\ref{universal}, this item holds.

    $(ii)$ Let $f$ be a $\gamma_{snR}$-function on $K_{\sigma_1,\sigma_2}$ with minimum number of vertices in $V_1$. 
    By Remark~\ref{Garifue}, there are
    no strong neighbors in $V_1$ and hence $|V_1|\le 1.$ As there is no strong edges between $V_1$ and $V_2,$
    there are only two possibilities: either $|V_2|=|V_0|=2$ and $|V_1|=0$ or $|V_2|=|V_1|=1$ and $|V_0|=2.$
    Moreover, if $|V_2|=|V_1|=1$ then both vertices must belong to the same vertex class.
    If $|V_2|=2$ then $w(f)=\sum_{v\in V} f(v) \mu_s(v)=2\mu_s(x_1)+ 2\mu_s(y_1)=4\mu_s(x_1)$ because $f$
    has minimum weight.
    If $|V_2|=|V_1|=1$ then $w(f)=\min\{ 2 \mu_s(x_1)+\mu_s(x_2), 2\mu_s(y_1)+\mu_s(y_2) \}
    =2\mu_s(x_1)+\min\{ \mu_s(x_2) , \mu_s(y_2) \}$ because $f$
    has minimum weight.

    Therefore $\gamma_{snR}(K_{\sigma_1,\sigma_2})=4\mu_s(x_1),$ if $2\mu_s(x_1) \le \min\{ \mu_s(x_2),\mu_s(y_2) \}$ 
    and $\gamma_{snR}(K_{\sigma_1,\sigma_2})=2\mu_s(x_1)+\min\{\mu_s(x_2), \mu_s(y_2)\}$ otherwise.

    $(iii)$ The function $f:V\rightarrow \{0,1,2\}$ defined as $f(x_1)=2$, $f(y_1)=2$ and 
    $f(x_2)=0$ if $2\mu_s(x_1) \le \mu_s(x_2)$, $f(y_1)=0$ and $f(x_2)=1$ if $2\mu_s(x_1) > \mu_s(x_2)$, and 
    $f(y_j)=0$, for $j=2,\ldots,\sigma_2$ is an SNRDF on $K_{\sigma_1,\sigma_2}$ which guarantees that $$\gamma_{snR}(K_{\sigma_1,\sigma_2})\le  
            \left\{
                \begin{array}{ll} 
                4\mu_s(x_1),  & \mbox{if } 2\mu_s(x_1) \le \mu_s(x_2) \\[2ex] 
                2\mu_s(x_1)+\mu_s(x_2), & \mbox{if } 2\mu_s(x_1) \ge \mu_s(x_2),
                \end{array}
            \right.
        $$ that is, \begin{equation}\label{ine3}
            \gamma_{snR}(K_{\sigma_1,\sigma_2})\le\min\{4\mu_s(x_1),2\mu_s(x_1)+\mu_s(x_2)\}.
        \end{equation}
    To prove the another inequality, let $f$ be a $\gamma_{snR}$-function on $K_{\sigma_1,\sigma_2}$ with 
    minimum number of vertices in $V_1$. Again, as in $(ii)$, by Remark~\ref{Garifue}, there are no strong 
    neighbors in $V_1$, following that $|V_1|\le 1.$ Clearly, $0<|V_2|\le 2$, because if not, 
    $\gamma_{snR}(K_{\sigma_1,\sigma_2})=w(f)\ge 6\mu_s(x_1)$, contradicting (\ref{ine3}). If $|V_2|=2$ 
    then these two vertices having a label $2$ must belong to different vertex classes, because otherwise, reassigning 
    label 0 to one of them and keeping all other values of $f$ to be the same, we find a new SNRDF on $G$ with 
    a smaller weight than the weight of $f$, a contradiction. Then $\gamma_{snR}(K_{\sigma_1,\sigma_2})=w(f)\ge 
    2\mu_s(x_1)+2\mu_s(y_1)=4\mu_s(x_1)\ge \min\{4\mu_s(x_1),2\mu_s(x_1)+\mu_s(x_2)\}$. If $|V_2|=1$ then one 
    vertex of $X$ must have 1 label by $f$ and the other one must have 2 label, yielding that 
    $\gamma_{snR}(K_{\sigma_1,\sigma_2})=w(f)\ge 2\mu_s(x_1)+\mu_s(x_2)\ge \min\{4\mu_s(x_1),2\mu_s(x_1)+\mu_s(x_2)\}$.
    
    $(iv)$ The function $f:V\rightarrow \{0,1,2\}$ defined as $f(x_1)=2$, $f(y_1)=2$, $f(x_i)=f(y_j)=0$, for 
    $i=2,\ldots,\sigma_1$ and $j=2,\ldots,\sigma_2$ is an SNRDF on $K_{\sigma_1,\sigma_2}$ which guarantees that 
    \begin{equation}\label{ine4} 
        \gamma_{snR}(K_{\sigma_1,\sigma_2})\le 4\mu_s(x_1)
    \end{equation}
    To prove the another inequality, let $f$ be a $\gamma_{snR}$-function on $K_{\sigma_1,\sigma_2}$ with 
    minimum number of vertices in $V_1$. As in $(ii)$ and $(iii)$, by Remark~\ref{Garifue}, there are no strong 
    neighbors in $V_1$. Clearly, $0<|V_2|\le 2$, because if not, $\gamma_{snR}(K_{\sigma_1,\sigma_2})=w(f)\ge 6\mu_s(x_1)$, 
    contradicting (\ref{ine4}). If $|V_2|=2$ then these two vertices with 2 label must belong to different vertex 
    class, because otherwise, reassigning label 0 to one of them and keeping all other values of $f$ to be the same, 
    we find a new SNRDF on $K_{\sigma_1,\sigma_2}$ with a smaller weight than the weight of $f$, which is not 
    possible. Then $\gamma_{snR}(K_{\sigma_1,\sigma_2})=w(f)\ge 2\mu_s(x_1)+2\mu_s(y_1)=4\mu_s(x_1)$. If $|V_2|=1$ 
    then in one the vertex classes, one vertex is labeled with 2 by $f$ and the rest of vertices are labeled with 1. 
    As $\sigma_2\ge \sigma_1\ge 3$, we have $|V_1|\ge 2$, and therefore, 
    $$
    \begin{array}{rcl} 
        \gamma_{snR}(K_{\sigma_1,\sigma_2})=w(f) & \ge & \min\{2\mu_s(x_1)+
                    \mu_s(x_2)+\mu_s(x_3),2\mu_s(y_1)+\mu_s(y_2)+\mu_s(y_3)\} \\[2ex] 
                                                & \ge & 2\mu_s(x_1)+\min\{\mu_s(x_2)+
                    \mu_s(x_3),\mu_s(y_2)+\mu_s(y_3)\}                         \\[2ex] 
                                                & \ge & 2\mu_s(x_1)+\min\{2\mu_s(x_1),2\mu_s(y_1)\}
                                                = 4\mu_s(x_1).
    \end{array}$$ 
This finishes the proof.
\end{proof}

Next we provide a condition that is both necessary and sufficient for a non-trivial fuzzy graph to have 
a $\gamma_{snR}$ twice the size of the graph.

\begin{proposition}
    Let $G:(V,\sigma,\mu)$ be a non-trivial fuzzy graph of size $q$. Then $\gamma_{snR}(G)=2q$ if and only 
    if all edges are strong and each vertex is either an isolated vertex or has a unique strong neighbor. 
\end{proposition}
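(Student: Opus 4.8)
The plan is to sandwich $\gamma_{snR}(G)$ between natural graph quantities and reduce the entire statement to an equality analysis of a single chain of inequalities. The starting observation is that the constant function $f\equiv 1$ is always a SNRDF (no vertex is labelled $0$, so the defining condition is vacuous), whence $\gamma_{snR}(G)\le \sum_{u\in V}\mu_s(u)$. I would then chain this with two elementary termwise estimates: since $\mu_s(u)$ is a minimum of nonnegative numbers while $d_s(u)=\sum_{v\in N_s(u)}\mu(u,v)$ is their sum, we have $\mu_s(u)\le d_s(u)$; and since $N_s(u)\subseteq N(u)$ with all weights positive, $d_s(u)\le d(u)$. The fuzzy handshaking identity $\sum_{u\in V}d(u)=2q$ then closes the chain:
\[
\gamma_{snR}(G)\;\le\;\sum_{u\in V}\mu_s(u)\;\le\;\sum_{u\in V}d_s(u)\;\le\;\sum_{u\in V}d(u)\;=\;2q,
\]
where I use the convention, implicit in the excerpt, that $\mu_s(u)=d_s(u)=0$ for an isolated vertex.

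For the \emph{only if} direction this chain does almost all the work. If $\gamma_{snR}(G)=2q$, the two extreme terms coincide, so every inequality above is forced to be an equality. From $\sum_u d_s(u)=\sum_u d(u)$ together with the termwise bound $d_s(u)\le d(u)$ I get $d_s(u)=d(u)$ for every $u$; since each missing term $\mu(u,v)$ with $v\in N(u)\setminus N_s(u)$ is strictly positive, this forces $N_s(u)=N(u)$ for all $u$, i.e.\ every edge of $G$ is strong. From $\sum_u\mu_s(u)=\sum_u d_s(u)$ and $\mu_s(u)\le d_s(u)$ I likewise get $\mu_s(u)=d_s(u)$ for every $u$; for a vertex with $N_s(u)\neq\emptyset$ a minimum equals a sum of positive terms only when there is a single term, so $|N_s(u)|=1$. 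Combining the two conclusions, each vertex is either isolated (when $N(u)=N_s(u)=\emptyset$) or has a unique strong neighbour, which is exactly the asserted structure.

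For the \emph{if} direction, suppose all edges are strong and each vertex is isolated or has a unique strong neighbour. Then $N(u)=N_s(u)$ with $|N_s(u)|\le 1$, so $G^*$ is a disjoint union of isolated vertices and single strong edges, and for each such edge $\{u,v\}$ we have $\mu_s(u)=\mu_s(v)=\mu(u,v)$. The upper bound $\gamma_{snR}(G)\le 2q$ from the chain now becomes the equality $\sum_u\mu_s(u)=\sum_{\{u,v\}}\bigl(\mu_s(u)+\mu_s(v)\bigr)=\sum_{\{u,v\}}2\mu(u,v)=2q$, where the sums range over the edges, so it remains to prove the matching lower bound $\gamma_{snR}(G)\ge 2q$. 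This is the one place that needs a short argument rather than a direct computation: I would localise an arbitrary SNRDF $f$ to a single edge-component $\{u,v\}$ and verify, by inspecting the labellings, that its weight there is at least $2\mu(u,v)$ — indeed $f(u)=0$ forces $f(v)=2$ (the only strong neighbour of $u$ is $v$), giving weight $2\mu(u,v)$, while $f(u),f(v)\ge 1$ gives weight $\ge\mu_s(u)+\mu_s(v)=2\mu(u,v)$, and isolated vertices contribute nothing. Summing over the components yields $w(f)\ge 2q$, hence $\gamma_{snR}(G)=2q$. The main obstacle is conceptual rather than technical: spotting that the constant SNRDF together with the handshaking identity assemble into the single inequality chain whose equality cases are precisely the two structural hypotheses; once that chain is in hand, the \emph{only if} direction is immediate, and only the component-wise lower bound in the \emph{if} direction requires the short case check.
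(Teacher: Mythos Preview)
Your proof is correct and takes a genuinely different route from the paper's, particularly in the \emph{only if} direction. The paper constructs an explicit SNRDF from a minimum strong dominating set $D$ of the subgraph induced by the strong edges, obtains $\gamma_{snR}(G)\le 2\sum_{x\in D}\mu_s(x)$, and then argues separately that a non-strong edge or a vertex with two strong neighbours would force this quantity strictly below $2q$. You instead exploit the single chain
\[
\gamma_{snR}(G)\;\le\;\sum_{u}\mu_s(u)\;\le\;\sum_{u}d_s(u)\;\le\;\sum_{u}d(u)\;=\;2q
\]
via the all-ones SNRDF and handshaking, and read off the structural conclusions directly from the termwise equality cases $d_s(u)=d(u)$ (forcing $N_s(u)=N(u)$) and $\mu_s(u)=d_s(u)$ (forcing $|N_s(u)|\le 1$). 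This is more economical and conceptually cleaner: one inequality chain simultaneously yields the upper bound and, upon equality, both structural hypotheses at once. The paper's approach, by contrast, has the mild advantage of being self-contained without invoking any convention on $\mu_s$ at vertices with no strong neighbours. For the \emph{if} direction the two arguments are essentially the same component-wise case check.
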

\begin{proof}
    If all edges are strong and each vertex is either an isolated vertex or has a unique strong neighbor, 
    then $V=X\cup Y\cup Z$, where $X=\{x_1,\ldots,x_r\}$, $Y=\{y_1,\ldots,y_r\}$, $Z=V\setminus (X\cup Y)$ 
    and $\{(x_1,y_1),\ldots,(x_r,y_r)\}$ is the set of strong edges of $G$. 
    Let $f:V\rightarrow \{0,1,2\}$ be the function on $G$ defined as $f(x_i)=2$ and $f(y_i)=0$, for 
    $i=1,\ldots, r$, and $f(w)=1$, if $w\in V\setminus (X\cup Y)$. Clearly, $f$ is an SNRDF and its weight is 
    $w(f)=2\sum_{i=1}^r\mu(x_i,y_i)=2q$. Hence, $\gamma_{snR}(G) \le 2q$. On the other hand, let $f$ be a 
    $\gamma_{snR}(G)$-function. Since $\{x_i,y_i\}$ must mutually dominate each other, then
    $\gamma_{snR}(G)=w(f)\ge 2\sum_{i=1}^r\mu(x_i,y_i)=2q$.
   
    Now, assume that $\gamma_{snR}(G)=2q$. Denote by $E$ and $E_s$ the sets of edges and strong edges of $G$, 
    respectively, and let $D\subseteq V$ be a $\gamma_s$-set of the subgraph of $G$ induced by the strong 
    edges of $G$. Consider the function $f:V\rightarrow \{0,1,2\}$ defined as $f(x)=2$ if $x\in D$, $f(x)=0$ 
    if $x$ is strong dominated by some vertex of $D$, and $f(x)=1$, otherwise, whose weight is 
    \begin{equation}\label{starc} 
    \displaystyle \gamma_{snR}(G)\le w(f)=2\sum_{x\in D}\mu_s(x)\end{equation}
    If there is an edge $(u,v)\in E$ which is not strong, then from (\ref{starc}) if follows that 
    $w(f)\le 2\sum_{e\in E_s}\mu(e)<2\sum_{e\in E}\mu(e)=2q,$ a contradiction. Thus, all edges are strong.
    If there is a vertex with at least two strong neighbors, then $|D|<|E_s|$, yielding from (\ref{starc}) 
    that $\displaystyle \gamma_{snR}(G)\le w(f)<2\sum_{e\in E_s}\mu(e)=2q$, again a contradiction. Hence, we
    have that each vertex either has a unique strong neighbor or is an isolated vertex.
\end{proof}

\begin{remark}\label{20}
Let $f$ be a $\gamma_{snR}$-function on a fuzzy graph $G:(V,\sigma,\mu)$. Then every
vertex $u\in V$ for which $f(u)=2$ has at least one strong neighbor $v\in N_s(u)$ for which $f(v)=0$. 
\end{remark}
\begin{proof}
    If $f(u)=2$ and every $v\in N_s(u)$ has $f(v)\ne 0$, then we can find an 
    SNRDF $g:V\rightarrow \{0,1,2\}$ such that $g(u)=1$, and $g(w)=f(w)$, for every $w\in V\setminus\{u\}$ 
    whose weight is $g(V)=w(f)-\mu_s(u)<w(f)$, being a contradiction to the fact that $f$ is a 
    $\gamma_{snR}$-function. 
\end{proof}

The following lemma shows that from a $\gamma_{snR}$-function on a fuzzy graph with associated partition 
$(V_0,V_1,V_2)$ we can obtain another SNRDF by exchanging the $V_0$ and $V_2$.

\begin{lemma}\label{2change0}
 Let $f:V\rightarrow \{0,1,2\}$ be a $\gamma_{snR}$-function on a fuzzy graph $G:(V,\sigma,\mu)$ with 
 partition $(V_0,V_1,V_2)$. Then the function $g:V\rightarrow \{0,1,2\}$ defined as $g(x)=2$ if $x\in V_0$, 
 $g(x)=1$ if $x\in V_1$, and $g(x)=0$ if $x\in V_2$ is an SNRDF on $G$.    
\end{lemma}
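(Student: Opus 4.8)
The plan is to verify directly the single defining property of an SNRDF for the function $g$, namely that every vertex assigned the value $0$ by $g$ possesses a strong neighbor assigned the value $2$ by $g$. The key observation is that $g$ is obtained from $f$ simply by swapping the labels on $V_0$ and $V_2$ while leaving $V_1$ untouched, so the partition associated to $g$ is $(V_2, V_1, V_0)$. In particular, $g(u)=0$ holds precisely when $u\in V_2$, and $g(v)=2$ holds precisely when $v\in V_0$.

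First I would translate the SNRDF condition into the original partition. A vertex $u$ with $g(u)=0$ is exactly a vertex of $V_2$, that is, a vertex with $f(u)=2$. The requirement to be checked is therefore that every such $u$ has a strong neighbor $v\in N_s(u)$ lying in $V_0$, i.e.\ with $f(v)=0$.

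This is exactly the content of Remark~\ref{20}, which guarantees that for a $\gamma_{snR}$-function $f$, every vertex $u$ with $f(u)=2$ has at least one strong neighbor $v\in N_s(u)$ with $f(v)=0$. Applying this to each $u\in V_2$ produces, for every vertex on which $g$ takes the value $0$, a strong neighbor on which $g$ takes the value $2$. Hence $g$ satisfies the defining condition of a SNRDF and the claim follows.

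I do not anticipate a genuine obstacle here: the whole argument is a relabeling together with a single invocation of Remark~\ref{20}. The only point demanding a little care is the bookkeeping of which set corresponds to which label under $g$, so I would state the correspondence $g^{-1}(0)=V_2$ and $g^{-1}(2)=V_0$ explicitly before quoting the remark, to make the logical step transparent. Note also that no weight comparison is needed, since the statement only asserts that $g$ is \emph{an} SNRDF, not that it is optimal.
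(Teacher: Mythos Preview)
Your proposal is correct and matches the paper's own proof almost verbatim: both reduce the SNRDF condition for $g$ to showing that every $u\in V_2$ has a strong neighbor in $V_0$, and then invoke Remark~\ref{20} to conclude.
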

\begin{proof}
    In order to check that $g$ is an SNRDF on $G$ it only remains to prove that every vertex $u\in V$ for
    which $g(u)=0$ has at least one strong neighbor $v\in V$ for which $g(v)=2$. If $g(u)=0$ then $f(u)=2$, 
    and therefore, by Remark~\ref{20}, there exists at least one vertex $v\in N_s(u)$ for which $f(v)=0$. 
    This implies that $g(v)=2$ and the result follows. 
\end{proof}

\begin{remark}\label{V0V2}
    If $f:V\rightarrow \{0,1,2\}$ is a $\gamma_{snR}$-function on a fuzzy graph $G$ with partition $(V_0,V_1,V_2)$, 
    then $|V_2|\le |V_0|$. Further, if $|V_2|=|V_0|$ then every vertex $v\in V_2$ has one only strong neighbor 
    in $V_0$.  
\end{remark}
\begin{proof}
    The proof is straighforward, because if $|V_2|>|V_0|$ then there must exist $u\in V_2$ such that every 
    vertex of $V_0$ has at least one strong neighbor in $V_2\setminus\{u\}$, and therefore, the function 
    $g:V\rightarrow \{0,1,2\}$ defined as $g(u)=1$ and $g(v)=f(v)$, for every $v\in V\setminus\{u\}$ is 
    an SNRDF on $G$ having $w(g)<w(f)$. This contradicts the fact that $f$ is a $\gamma_{snR}$-function. 
    If $|V_2|=|V_0|$ and there is some vertex $w\in V_2$ with at least two strong neighbors in $V_0$, then 
    there must exist $u\in V_2$ such that every vertex of $V_0$ has a strong neighbor in $V_2\setminus\{u\}$. 
    Thus, reasoning as before, we can find an SNRDF $g$ with smaller weight than $f$, a contradiction. 
 \end{proof}

The next result gives a general upper bound for the SNRD number of a fuzzy 
graph in terms of its order. Further, a necessary and sufficient condition to attain equality is given.  

\begin{theorem}\label{thp}
    Let $G:(V,\sigma,\mu)$ be a non-trivial fuzzy graph of order $p$. Then $\gamma_{snR}(G)\le p,$ and the 
    equality holds if and only if $G$ is formed by some isolated vertices and the disjoint union of
    effective edges.
\end{theorem}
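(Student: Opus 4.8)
The plan is to obtain the bound from the cheapest possible witness and then extract the equality case by forcing each inequality used in the bound to be tight. For the upper bound I would take the constant function $f\equiv 1$, which is vacuously an SNRDF since it assigns no vertex the value $0$. Its weight is $w(f)=\sum_{u\in V}\mu_s(u)$, so the bound reduces to the pointwise inequality $\mu_s(u)\le\sigma(u)$: if $u$ has a strong neighbour, the neighbour $v$ attaining $\mu_s(u)$ satisfies $\mu_s(u)=\mu(u,v)\le\sigma(u)\wedge\sigma(v)\le\sigma(u)$, and for an isolated vertex $\mu_s(u)=0\le\sigma(u)$. Summing gives $\gamma_{snR}(G)\le w(f)=\sum_{u}\mu_s(u)\le\sum_{u}\sigma(u)=p$.

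For equality I would exploit that $\gamma_{snR}(G)=p$ collapses the chain $\gamma_{snR}(G)\le\sum_u\mu_s(u)\le p$ into equalities. From $\sum_u\mu_s(u)=p$ and $\mu_s(u)\le\sigma(u)$ I get $\mu_s(u)=\sigma(u)$ for every $u\in V$. Unpacking this pointwise: an isolated vertex would force $\sigma(u)=\mu_s(u)=0$, so every vertex of positive membership has a strong neighbour; and for a strong edge $(u,v)$ the two equalities $\mu(u,v)=\mu_s(u)=\sigma(u)$ and $\mu(u,v)=\mu_s(v)=\sigma(v)$ show that $(u,v)$ is effective and that $\sigma(u)=\sigma(v)=\mu(u,v)$. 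Thus the strong edges are effective and membership is constant along each of them.

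It remains to show that no vertex has two strong neighbours, and here I would use that the first inequality is also tight, i.e.\ that $f\equiv 1$ is optimal, via a local exchange in the spirit of Proposition~\ref{propiedades} and Remark~\ref{V0V2}. If some $u$ had distinct strong neighbours $v,w$, the function giving $2$ to $u$, $0$ to $v$ and $w$, and $1$ to all other vertices is an SNRDF of weight $\sum_z\mu_s(z)+\mu_s(u)-\mu_s(v)-\mu_s(w)=p+\sigma(u)-\sigma(v)-\sigma(w)$; since membership is constant along the strong edges $uv$ and $uw$, this equals $p-\sigma(u)<p$, contradicting $\gamma_{snR}(G)=p$. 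Hence the strong edges form a matching on the positive-membership vertices, and combined with the previous paragraph this says precisely that $G$ is a disjoint union of effective edges together with (zero-membership) isolated vertices. The converse is a direct check: on such a graph $\mu_s(u)=\sigma(u)$ everywhere, each effective edge $(x_i,y_i)$ contributes exactly $2\mu(x_i,y_i)=\sigma(x_i)+\sigma(y_i)$ to any SNRDF because its endpoints must mutually dominate, and isolated vertices contribute $0$; summing over the (independent) components returns $\gamma_{snR}(G)=p$.

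The main obstacle I anticipate is making the optimality of $f\equiv 1$ yield the matching structure: the exchange argument only produces a contradiction after the equality $\mu_s\equiv\sigma$ has already forced membership to be constant along strong edges, so the two halves of the equality case must be carried out in the right order. A smaller but real point is the convention $\mu_s(u)=0$ for isolated $u$, which is what is needed for the bound and its equality analysis to be consistent and which is exactly what pins the isolated vertices in the extremal graphs to zero membership.
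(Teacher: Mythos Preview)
Your argument is correct and takes a genuinely simpler route than the paper. Both proofs pass through the same intermediate inequality $\gamma_{snR}(G)\le\sum_{u}\mu_s(u)\le p$, but you reach it by the one-line observation that the constant function $f\equiv 1$ is an SNRDF, whereas the paper starts from an optimal $f$ with partition $(V_0,V_1,V_2)$, invokes Lemma~\ref{2change0} to build the ``complementary'' SNRDF $g$ that swaps $V_0$ and $V_2$, and then averages $w(f)+w(g)=2\sum_u\mu_s(u)$. For the equality case the paper exploits that both $f$ and $g$ are optimal and applies Proposition~\ref{propiedades}(b) and Remark~\ref{V0V2} to force $|V_0|=|V_2|$ and a one-to-one strong matching between them; your alternative is to note that $f\equiv 1$ itself is optimal and run a single local exchange (promote a vertex with two strong neighbours to label $2$ and demote both neighbours to $0$), which after the already-established identity $\mu_s\equiv\sigma$ gives weight $p-\sigma(u)<p$. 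Your route avoids the auxiliary Lemma~\ref{2change0} and Remark~\ref{V0V2} entirely, at no cost in rigour. One small step you leave implicit (as does the paper) is why the strong edges forming a matching forces \emph{all} edges to lie in that matching: if a non-strong edge $(u,v)$ existed, a strongest $u$--$v$ path would consist of edges of strictly larger weight, each of which is then strong by maximality, producing an internal vertex with two strong neighbours. Finally, your sharper reading of the extremal class---effective edges with $\sigma$ equal on both endpoints, and isolated vertices of zero membership---is exactly what the equalities $\mu_s(u)=\sigma(u)$ enforce and is implicitly what the paper's own proof establishes.
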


\begin{proof}
    Let $f:V\rightarrow \{0,1,2\}$ be a $\gamma_{snR}$-function on $G$ with partition $(V_0,V_1,V_2)$. 
    By Lemma~\ref{2change0} there exists an SNRDF $g:V\rightarrow \{0,1,2\}$ with partition $(V'_0,V'_1,V'_2)$ 
    such that $V'_0=V_2$, $V'_1=V_1$ and $V'_2=V_0$. Then $f(u)+g(u)=2$, for all $u\in V$ and therefore, 
    \begin{equation}\label{fyg}
        \begin{array}{rcl} 
        2\gamma_{snR}(G) & \le & 2w(f)  \le  w(f)+w(g)  
                         =  \displaystyle \sum_{u\in V}  \left(f(u)+g(u)\right) \mu_s(u)  \\[2ex] 
                        & = & \displaystyle 2\sum_{u\in V} \mu_s(u) \le  2\sum_{u\in V} \sigma(u)  = 2p.
        \end{array}
    \end{equation} 
    Hence, $\gamma_{snR}(G)\le p$.

    Now, assume that $\gamma_{snR}(G)=p$. This occurs if and only if all the inequalities of~(\ref{fyg}) 
    become equalities. That is; $g$ is also a $\gamma_{snR}$-function and $\mu_s(u)=\sigma(u)$, for all 
    $u\in V$. As $V_1=V'_1$ and both $f$ and $g$ are $\gamma_{snR}$-functions, by Proposition~\ref{propiedades} 
    there is no edge between $V_1$ and $V_0\cup V_2$, yielding that the vertices of $V_1$, if any, are isolated. 
    Indeed, there are no more isolated vertices than those of $V_1$. Moreover, $|V_2|=|V_0|$, since by 
    Remark~\ref{V0V2}, we have $|V_2|\le |V_0|$ and $|V_0|=|V'_2|\le |V'_0|=|V_2|$. Thus, as $|V_2|=|V_0|$ 
    and therefore, $|V'_2|=|V'_0|$, again by Remark~\ref{V0V2}, each vertex of $V_2$ has one only strong 
    neighbor in $V_0$ and each vertex of $V_0$ has one only strong neighbor in $V_2$. This means that 
    $G[V_0\cup V_2]$ is a set of disjoint edges. Further, each of these edges, $(u_1,v_1),\ldots,(u_r,v_r)$ 
    satisfies that $\sigma(u_i)=\sigma(v_i)$, for $i=1,\ldots,r$, since $f$ and $g$ are $\gamma_{snR}$-functions 
    and $\mu_s(u)=\sigma(u)$, for all $u\in V$. This proves the result.  
\end{proof}

At last, we establish a Nordhaus–Gaddum inequality concerning the combined strong neighbor Roman domination 
numbers of a graph and its complementary graph.

\begin{proposition}
    Let $G$ be a graph with order $p$ such that $G$ and $\overline{G}$ are non-trivial graphs. Then
    $ 2(\mu_{\min}+\overline{\mu}_{\min})\le \gamma_{snR}(G)+\gamma_{snR}(\overline{G})<2p.$
\end{proposition}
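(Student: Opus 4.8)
The plan is to treat the two inequalities separately, deriving each from the general bounds already established. For the lower bound I would first isolate the auxiliary estimate $\gamma_{snR}(G)\ge 2\mu_{\min}$ for any non-trivial fuzzy graph, and then apply it to both $G$ and $\overline{G}$. Choose a $\gamma_{snR}$-function $f$ whose set $V_1$ has minimum cardinality. Since $G$ is non-trivial it has a strong edge by Remark~\ref{Garifue}(a), and by Remark~\ref{Garifue}(b) the set $V_1$ contains no pair of strong neighbors; hence $V_2\neq\emptyset$, because otherwise $V_0=\emptyset$ as well and $V=V_1$ would contain both endpoints of a strong edge. Picking $u\in V_2$, Remark~\ref{20} guarantees that $u$ has a strong neighbor, so $N_s(u)\neq\emptyset$ and thus $\mu_s(u)\ge\mu_{\min}$. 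As every summand $f(v)\mu_s(v)$ is nonnegative, $\gamma_{snR}(G)=w(f)\ge 2\mu_s(u)\ge 2\mu_{\min}$. The same estimate applied to $\overline{G}$ gives $\gamma_{snR}(\overline{G})\ge 2\overline{\mu}_{\min}$, and adding the two inequalities yields the left-hand bound.

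For the upper bound, Theorem~\ref{thp} already provides $\gamma_{snR}(G)\le p$ and $\gamma_{snR}(\overline{G})\le p$, so $\gamma_{snR}(G)+\gamma_{snR}(\overline{G})\le 2p$; the real task is to exclude equality. I would argue by contradiction: if the sum equals $2p$, then necessarily $\gamma_{snR}(G)=\gamma_{snR}(\overline{G})=p$, so both $G$ and $\overline{G}$ fall under the equality characterization of Theorem~\ref{thp}. Restricting attention to $\sigma^*$ and using that the equality analysis in that theorem forces $\mu_s(u)=\sigma(u)$ for every $u$ (whence each matched pair has equal membership and no vertex of $\sigma^*$ is isolated), I would conclude that $G^*$ is a perfect matching on $\sigma^*$, and symmetrically that $\overline{G}^*$ is a perfect matching on the very same vertex set.

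The decisive computation is then to identify $\overline{G}^*$ explicitly. For $x,y\in\sigma^*$ one has $\overline{\mu}(x,y)=\sigma(x)\wedge\sigma(y)-\mu(x,y)$, which vanishes exactly on the matching edges of $G^*$ (where $\mu(x,y)=\sigma(x)\wedge\sigma(y)$) and is strictly positive on every remaining pair. Hence $\overline{G}^*$ is precisely the crisp complement of the perfect matching $G^*$. Writing $n=|\sigma^*|$, which is even since $G^*$ is a perfect matching, every vertex of $\overline{G}^*$ has degree $n-2$; as $n-2$ is even it is different from $1$, so $\overline{G}^*$ cannot be a perfect matching (and for $n=2$ it is edgeless, which would make $\overline{G}$ trivial). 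This contradicts $\gamma_{snR}(\overline{G})=p$ (respectively, the non-triviality of $\overline{G}$), so the inequality must be strict.

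I expect the main obstacle to be the middle step rather than the final combinatorial observation: one must extract from the \emph{proof} of Theorem~\ref{thp} the exact structure forced by $\gamma_{snR}(G)=p$, namely that $\sigma^*$ carries no isolated vertex and that $G^*$ is a genuine perfect matching of effective edges with equal endpoint memberships. The bare statement permits ``isolated vertices,'' and only the equality analysis, through $\mu_s(u)=\sigma(u)$ and $2\bigl(\sigma(u_i)\wedge\sigma(v_i)\bigr)=\sigma(u_i)+\sigma(v_i)$, rules them out on $\sigma^*$. Once this structure is secured, the observation that the complement of a perfect matching is never a perfect matching closes the argument immediately.
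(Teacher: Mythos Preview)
Your proposal is correct and follows the same overall strategy as the paper. For the lower bound, both you and the paper choose a $\gamma_{snR}$-function with $|V_1|$ minimum, argue that $V_2\neq\emptyset$ (the paper asserts this directly; you justify it via Remarks~\ref{Garifue} and~\ref{20}), and then bound below by $2\mu_{\min}$. For the upper bound, both invoke Theorem~\ref{thp} together with its equality characterization to rule out $\gamma_{snR}(G)+\gamma_{snR}(\overline{G})=2p$.

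The only substantive difference lies in the strictness step. The paper merely asserts that ``both $G$ and $\overline{G}$ could not have simultaneously this same structure'' and stops there. You actually supply the missing argument: you extract from the equality analysis of Theorem~\ref{thp} the fact that $\mu_s(u)=\sigma(u)$ for every $u$, deduce that $G^*$ is a genuine perfect matching on $\sigma^*$, compute that $\overline{G}^*$ is then the crisp complement of that matching, and finish with the parity observation that such a complement has all degrees $n-2\neq 1$ (with $n=2$ handled by non-triviality of $\overline{G}$). Your argument is therefore more complete than the paper's on this point, at the cost of reaching into the \emph{proof} of Theorem~\ref{thp} rather than using only its statement.
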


\begin{proof}
Let $f$ be a $\gamma_{snR}(G)$-function with minimum number of vertices in $V_1$. 
As $G$ is a non-trivial graph then there exists at least one edge in $G$, and therefore, $G$ has some strong 
edge.
Therefore, $V_2\ne\emptyset$ and $\gamma_{snR}(G)=w(f)\ge 2\mu_s(v)$ for some vertex $v\in V_2.$ So,
$\gamma_{snR}(G)\ge 2\mu_{\min}$. Analogously, $\gamma_{snR}(\overline{G})\ge 2\overline{\mu}_{\min}$ and 
the first inequality holds.

By Theorem \ref{thp} we have $\gamma_{snR}(G)\le p$. As $\overline{G}$ has the same order, we deduce that
$\gamma_{snR}(\overline{G})\le p$. Besides, by Theorem \ref{thp}, the equality holds only when the graph is
formed by disjoint effective edges and isolated vertices. Since both $G$ and $\overline{G}$ could not have 
simultaneously this same structure, we may derive that $\gamma_{snR}(G)+\gamma_{snR}(\overline{G})<2p$.
\end{proof}

\section{The strong-neighbors Roman domination in fuzzy cycles and paths}

Now we focus on approximating the SNRD number in fuzzy paths and fuzzy strong cycles. 
A fuzzy cycle is strong if all its fuzzy edges are strong. Every fuzzy path 
is strong because no other path exists between two neighbors than the edge joining them. 
Further, it is easy to characterize the fuzzy strong cycles, as seen in the following remark. 

\begin{remark}\label{cy2strong}
    Let $n\ge 3$ be an integer and let $C_n:(V,\sigma,\mu)$ be a fuzzy cycle with $|\sigma^*|=n$, 
    $C_n:u_1,u_2,\ldots,u_n,u_1$, so that $\mu(u_n,u_1)=\wedge\{\mu(e):e\in \mu^*\}$. 
    Then $C_n$ is a fuzzy strong cycle if and only if there exists $j\in\{1,\ldots,n-1\}$ such that 
    $\mu(u_j,u_{j+1})=\mu(u_n,u_1)$. 
\end{remark}
\begin{proof}
    First, assume that every edge is strong. As $\mu(u_n,u_1)=\wedge\{\mu(e):e\in \mu^*\}$ and $(u_n,u_1)$ 
    is strong, we have $\mu(u_n,u_1)={CONN}_G(u_n,u_1)$. Thus, $s(P:u_1,u_2,\ldots,u_n)\le \mu(u_n,u_1)$, 
    following the existence of $j\in\{1,\ldots,n-1\}$ such that $\mu(u_j,u_{j+1})=\mu(u_n,u_1)$.

    Second, assume that there is $j\in\{1,\ldots,n-1\}$ such that $\mu(u_j,u_{j+1})=\mu(u_n,u_1)$. As 
    we have two edges with minimum weight, the strength of every path in $G$ is $\mu(u_n,u_1)$, which 
    is less than or equal to the weight of every edge. Hence, every edge is strong. 
\end{proof}

We first prove the following lemma.

\begin{lemma}\label{cotamuscycle}
    Let $n\ge 3$ be an integer and let $C_n:(V,\sigma,\mu)$ be a fuzzy strong cycle with $|\sigma^*|=n$, 
    represented as $C_n:u_1,u_2,\ldots,u_n,u_1$. Then 
    $\sum_{i=1}^n\mu_s(u_i)\le q-\left(\mu_{\max}-\mu_{min}\right)$ and equality holds 
    if and only if there exists $j\in\{1,\ldots , n-2\}$ such that 
    $\mu_{\min}=\mu(u_{j},u_{j+1})=\mu(u_{j+1},u_{j+2})$ and further, 
    $\displaystyle \left\{\mu(u_i,u_{i+1})\right\}_{i=1}^j$ and 
    $\displaystyle \left\{\mu(u_i,u_{(i+1)(\modx n)})\right\}_{i=j+1}^n$ are, respectively, a 
    non-increasing and a non-decreasing sequence.
\end{lemma}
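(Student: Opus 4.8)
The plan is to translate the whole statement into an assertion about the cyclic sequence of edge weights. Since $C_n$ is a strong cycle, every edge is strong, so each vertex has exactly two strong neighbours; writing $w_i=\mu(u_i,u_{i+1})$ with indices read modulo $n$ (so that $w_0=w_n$), we have $N_s(u_i)=\{u_{i-1},u_{i+1}\}$ and $\mu_s(u_i)=\min\{w_{i-1},w_i\}$. The elementary identity $w_i-\min\{w_{i-1},w_i\}=\max\{0,w_i-w_{i-1}\}$ then gives
$$q-\sum_{i=1}^n\mu_s(u_i)=\sum_{i=1}^n\max\{0,\,w_i-w_{i-1}\},$$
the total \emph{ascent} of the weights around the cycle. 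Thus the inequality is equivalent to showing that this ascent is at least $\mu_{\max}-\mu_{\min}$, and the equality statement becomes the question of when the ascent equals $\mu_{\max}-\mu_{\min}$ exactly.

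For the inequality I would fix an edge $e_a$ with $w_a=\mu_{\min}$ and an edge $e_b$ with $w_b=\mu_{\max}$, and walk along one of the two arcs of consecutive edges joining $a$ to $b$. Along that arc the consecutive differences telescope to $w_b-w_a=\mu_{\max}-\mu_{\min}$; bounding each difference by its positive part $\max\{0,\cdot\}$ and then enlarging the index set to all of $\{1,\dots,n\}$ (the extra summands being non-negative) yields $\mu_{\max}-\mu_{\min}\le\sum_{i=1}^n\max\{0,w_i-w_{i-1}\}$, which is exactly the desired bound.

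For equality, the two estimates above must be tight simultaneously: every difference along the chosen arc must be non-negative and every difference along the complementary arc must be non-positive. This forces the weight sequence to be \emph{cyclically unimodal} — non-decreasing from the valley up to the peak along one arc and non-increasing back down along the other (plateaus allowed). Here the strong-cycle hypothesis does the decisive work via Remark~\ref{cy2strong}: in a strong cycle the minimum weight $\mu_{\min}$ is attained by at least two edges, whereas in a unimodal sequence the set of minimum-weight edges forms a single contiguous block; hence that block contains two consecutive edges, giving an index $j$ with $w_j=w_{j+1}=\mu_{\min}$. After rotating the labelling so that this valley block lies in the interior and the unique peak falls at the $u_n$--$u_1$ junction, the descending arc is exactly $\{w_i\}_{i=1}^{j}$ and the ascending arc exactly $\{w_i\}_{i=j+1}^{n}$, with $j\in\{1,\dots,n-2\}$. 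The converse is the quick check that any configuration of this shape has total ascent $\mu_{\max}-\mu_{\min}$ (again by telescoping), so equality holds.

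I expect the inequality to be routine once the ascent reformulation is in place; the main obstacle is the equality direction, namely proving that tightness forces cyclic unimodality and then combining this with Remark~\ref{cy2strong} to produce the \emph{consecutive} minimum pair $e_j,e_{j+1}$ together with the two monotone arcs. The remaining index bookkeeping needed to place $j$ in the range $\{1,\dots,n-2\}$ is a matter of choosing the representation appropriately, and I would treat it as routine.
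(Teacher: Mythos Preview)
Your proposal is correct and takes a genuinely different route from the paper. The paper proceeds by direct bounding: assuming without loss of generality that $\mu_{\max}=\mu(u_n,u_1)$ and choosing $j$ with $\mu(u_j,u_{j+1})=\mu_{\min}$, it bounds $\mu_s(u_i)\le\mu(u_i,u_{i+1})$ for $i\le j$ and $\mu_s(u_i)\le\mu(u_{i-1},u_i)$ for $i>j$, so that each edge weight is counted once except the minimum edge (twice) and the maximum edge (not at all), giving $q+\mu_{\min}-\mu_{\max}$; the equality characterization is then read off term by term from these inequalities, with a case split on $j=1$ versus $j\ge 2$. Your ascent reformulation $q-\sum_i\mu_s(u_i)=\sum_i\max\{0,w_i-w_{i-1}\}$ is more conceptual: the inequality becomes the transparent fact that the total ascent of a cyclic sequence is at least $\mu_{\max}-\mu_{\min}$, and equality is exactly cyclic unimodality, from which the consecutive minimum pair follows cleanly via Remark~\ref{cy2strong} (the minimum plateau is contiguous and has size $\ge 2$). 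This buys a cleaner structural picture and avoids the case analysis. One small point to tighten when you write it out: when you say ``walk along one of the two arcs'', you must take the arc from $a$ to $b$ in the direction of increasing indices, so that the consecutive differences are literally the terms $w_i-w_{i-1}$ appearing in the ascent sum; the other arc telescopes with the opposite sign and does not feed directly into the bound.
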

\begin{proof}
    Without loss of generality we may assume that $\mu_{\max}=\mu(u_n,u_1)$. Since $C_n$ is a 
    fuzzy strong cycle, by Remark~\ref{cy2strong} we may assure that there exist at least two 
    edges with minimum weight. Thus, there exists $j\in\{1,\ldots,n-2\}$ such that 
    $\mu_{\min}=\mu(u_j,u_{j+1})$. If $j=1$ then 
    \begin{equation}\label{j1} 
    \begin{array}{rcl}     
        \displaystyle \sum_{i=1}^n\mu_s(u_i) & = & \displaystyle \mu_s(u_1)+\sum_{i=2}^n \mu_s(u_i) 
         \le  \displaystyle  \mu_{\min}+\sum_{i=2}^n\mu(u_{i-1},u_i) \\[2ex] 
        & = & \mu_{\min}+q-\mu(u_n,u_1) 
         =  q-\left(\mu_{\max}-\mu_{min}\right).
    \end{array}
    \end{equation} 
    If $j\ge 2$ then 
\begin{equation}\label{jge2} 
    \begin{array}{rcl}
        \displaystyle \sum_{i=1}^n\mu_s(u_i) & = & \displaystyle\sum_{i=1}^{j}\mu_s(u_i)+\sum_{i=j+1}^n\mu_s(u_i)
     \le   \displaystyle\sum_{i=1}^{j}\mu(u_{i},u_{i+1})+\sum_{i=j+1}^n\mu(u_{i-1},u_i) \\[2.5ex] 
     & = & \displaystyle\sum_{i=1}^{j-1}\mu(u_{i},u_{i+1})+\mu_{\min}+\sum_{i=j+1}^n\mu(u_{i-1},u_i)  \\[2.5ex] 
     & = & \mu_{\min}+q-\mu(u_n,u_1)    =     q-\left(\mu_{\max}-\mu_{min}\right).
    \end{array}
\end{equation}

    If there exists $j\in\{1,\ldots n-2\}$ such that $\mu_{\min}=\mu(u_{j},u_{j+1})$ and further, 
    $\displaystyle \mu(u_1,u_{2})\ge\mu(u_2,u_3)\ge\cdots\ge\mu(u_j,u_{j+1})$ and 
    $\displaystyle \mu(u_{j+1},u_{j+2})\le\mu(u_{j+2},u_{j+3})\le\cdots\le\mu(u_n,u_1)$ then $$\displaystyle 
    \sum_{i=1}^n\mu_s(u_i)=\sum_{i=1}^j\mu(u_i,u_{i+1})+\sum_{i=j+1}^n\mu(u_{i-1},u_{i})=q-\left(\mu_{\max}-\mu_{min}\right).$$ 
     Let us see the converse.
     
     As $C_n$ is a fuzzy strong cycle, we know that there exist two edges with minimum weight. Thus, there exists 
     $j\in\{1,\ldots,n-2\}$ such that $\mu_{\min}=\mu(u_j,u_{j+1})$. If $j=1$ then all the inequalities of~(\ref{j1}) 
     become equalities. That is, $\mu_s(u_1)=\mu(u_1,u_2)$ and $\mu_s(u_i)=\mu(u_{i-1},u_i)$ for $i=2,\ldots,n$, yielding 
     that $\mu_{\min}=\mu(u_1,u_2)\le \mu(u_2,u_3)\le \cdots\le \mu(u_n,u_1)$. As $C_n$ must have at least two edges with 
     minimum weight, it necessarily follows that $\mu_{\min}=\mu(u_1,u_2)= \mu(u_2,u_3)$. Now assume that $j\in\{2,\ldots,n-2\}$ 
     is the minimum integer for which $\mu_s(u_j)=\mu_{\min}$. Again, all the inequalities of~(\ref{jge2}) become equalities. 
     That is, $\mu_s(u_i)=\mu(u_i,u_{i+1})$ for $i=1\ldots,j$, and $\mu_s(u_i)=\mu(u_{i-1},u_i)$ for $i=j+1,\ldots,n$, yielding 
     that $\mu(u_1,u_2)\ge \mu(u_2,u_3)\ge\cdots\ge \mu(u_j,u_{j+1})$  and $\mu_{\min}=\mu(u_j,u_{j+1})\le 
     \mu(u_{j+1},u_{j+2})\le \cdots\le \mu(u_n,u_1)$. Finally, since $C_n$ has at least two edges with minimum weight 
     and $j$ is the minimum integer for which $\mu_s(u_j)=\mu_{\min}$, we deduce that $\mu(u_{j+1},u_{j+2})=\mu_{\min}$ and 
     the result follows.
    
\end{proof}

Lemma~\ref{cotamuscycle} will help us to get an upper bound for the strong-neighbors Roman domination number of a strong 
cycle, which is attained, as we can see in the following results.

\begin{theorem}\label{cyclecota}
    Let $n,k\ge 1$ be integers with $k=\lfloor n/3\rfloor$. Let $C_n=(V,\sigma,\mu)$ be a fuzzy cycle with 
    $|\sigma^*|=n, C_n:u_1,u_2,\ldots,u_n,u_1$, with at least two edges with minimum weight. Then 
    $$\displaystyle\gamma_{snR}(C_n)\le \left(1-\frac{k}{n}\right)\left(q-(\mu_{\max}-\mu_{\min})\right).$$
\end{theorem}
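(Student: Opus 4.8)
The plan is to combine a rotation-averaging argument over the cyclic symmetry of $C_n$ with the bound on $\sum_i\mu_s(u_i)$ supplied by Lemma~\ref{cotamuscycle}. First I would record that the hypothesis ``at least two edges of minimum weight'' is, by Remark~\ref{cy2strong}, exactly the condition that $C_n$ is a fuzzy strong cycle; hence every edge is strong and $N_s(u_i)=\{u_{i-1},u_{i+1}\}$ for every $i$ (indices modulo $n$). The crucial consequence is that on a strong cycle the SNRDF domination requirement (each $0$-vertex has a strong neighbor labelled $2$) coincides with the ordinary crisp Roman-domination requirement, and the latter is invariant under cyclic rotation. This is what lets me manufacture many valid SNRDFs cheaply.

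Next I would fix a single ``pattern'' $g:V\to\{0,1,2\}$ realizing an optimal crisp Roman dominating function of the underlying cycle: put a $2$ on the $k=\lfloor n/3\rfloor$ vertices $u_1,u_4,\dots,u_{3k-2}$, a $1$ on the $r=n-3k\in\{0,1,2\}$ leftover vertices $u_{3k},\dots,u_{n-1}$, and $0$ elsewhere. One checks that every $0$-vertex is adjacent to some $2$, so $g$ is a Roman dominating function, and since all edges are strong it is an SNRDF; its (unweighted) total value is $2k+r=n-k$. For $t=0,\dots,n-1$ I then define the rotated functions $f_t(u_i)=g(u_{i-t})$, each of which is again an SNRDF by rotation invariance. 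Consequently $\gamma_{snR}(C_n)\le\min_t w(f_t)\le\frac1n\sum_{t=0}^{n-1}w(f_t)$.

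The heart of the argument is the averaging computation. Swapping the order of summation,
\[
\sum_{t=0}^{n-1}w(f_t)=\sum_{i=1}^n\mu_s(u_i)\sum_{t=0}^{n-1}g(u_{i-t})=(n-k)\sum_{i=1}^n\mu_s(u_i),
\]
because as $t$ runs over all residues, $g(u_{i-t})$ runs over every value $g(u_1),\dots,g(u_n)$ exactly once, whose sum is the total value $n-k$ of the pattern. The key point is that rotation completely decouples the membership weights $\mu_s(u_i)$ from the combinatorial pattern, so the factor ``$1-k/n$'' emerges as $(n-k)/n$ without any need to optimize the placement of the $2$'s against the $\mu_s$-values. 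Dividing by $n$ gives $\gamma_{snR}(C_n)\le\frac{n-k}{n}\sum_{i=1}^n\mu_s(u_i)$.

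Finally I would feed in Lemma~\ref{cotamuscycle}, which yields $\sum_{i=1}^n\mu_s(u_i)\le q-(\mu_{\max}-\mu_{\min})$, to conclude that $\gamma_{snR}(C_n)\le\frac{n-k}{n}\bigl(q-(\mu_{\max}-\mu_{\min})\bigr)=\bigl(1-\frac{k}{n}\bigr)\bigl(q-(\mu_{\max}-\mu_{\min})\bigr)$, which is the stated inequality. The only genuine obstacle I anticipate is the bookkeeping needed to confirm that $g$ really is a dominating pattern of value exactly $n-k$ in each residue class $n\bmod 3$ (so that every rotation dominates); once that combinatorial verification is in place, the weighting step and the appeal to Lemma~\ref{cotamuscycle} are routine.
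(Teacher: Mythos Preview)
Your proposal is correct and follows essentially the same approach as the paper: both construct $n$ cyclically rotated SNRDFs whose labels sum to $n-k$ on the underlying cycle, average to obtain $\gamma_{snR}(C_n)\le\frac{n-k}{n}\sum_i\mu_s(u_i)$, and then invoke Lemma~\ref{cotamuscycle}. The only cosmetic difference is that the paper treats the three residues $n\bmod 3$ in separate cases (and for $n\equiv 2$ uses $k{+}1$ labels of $2$ rather than your $k$ twos plus two ones), whereas you handle them uniformly with a single pattern~$g$; the total label value is $n-k$ either way, so the averaging computation is identical.
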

\begin{proof}
    By Remark~\ref{cy2strong}, it follows that all the edges of $C_n$ are strong.
We will distinguish three possible cases according to the remainder of the quotient $n/k$.

{\it Case 1.} If $n=3k$. For every $m=1\ldots,n$, let $f_m:V\rightarrow \{0,1,2\}$ be the function defined as  
$f_{m}(u_{(m+3i)(\modx n)})=2$ for $i=0,\ldots,k-1$ and $f_m(v)=0$, otherwise. Clearly, $\{ f_m:m=1,\ldots,n\}$ are SNRDFs 
on $C_n$, and from Lemma~\ref{cotamuscycle} it follows that  
    \begin{equation}\label{cyine0}
            n\cdot\gamma_{snR}(C_n)\le \displaystyle\sum_{m=1}^n w(f_m) =  \displaystyle  2k\sum_{j=1}^{n}\mu_s(u_{j})   
                                   \le  \displaystyle 2k\left(q-(\mu_{\max}-\mu_{\min})\right),
    \end{equation} 
following that $\displaystyle \gamma_{snR}(C_n)\le \frac{2k}{n}\left(q-(\mu_{\max}-\mu_{\min})\right)=
\left(1-\frac{k}{n}\right)\left(q-(\mu_{\max}-\mu_{\min})\right)$.

{\it Case 2.} If $n=3k+1$. For every $m=1\ldots,n$, let $f_m:V\rightarrow \{0,1,2\}$ be the function defined as  
$f_m(u_{m})=1$, $f_{m}(u_{(m+2+3i)(\modx n)})=2$ for $i=0,\ldots,k-1$ and $f_m(v)=0$, otherwise. As  
$f_m$ are SNRDFs on $C_n$ for all $m$, and from Lemma~\ref{cotamuscycle} it follows that  
    \begin{equation}\label{cyine1}
            n\cdot\gamma_{snR}(C_n)\le \displaystyle\sum_{m=1}^n w(f_m) =  \displaystyle  (2k+1)\sum_{j=1}^{n}\mu_s(u_{j})   
                                       \le  \displaystyle (2k+1)\left(q-(\mu_{\max}-\mu_{\min})\right)
    \end{equation} 
following that $\displaystyle \gamma_{snR}(C_n)\le \frac{2k+1}{n}\left(q-(\mu_{\max}-\mu_{\min})\right)=
\left(1-\frac{k}{n}\right)\left(q-(\mu_{\max}-\mu_{\min})\right)$.

{\it Case 3.} If $n=3k+2$. For every $m=1\ldots,n$, let $f_m:V\rightarrow \{0,1,2\}$ be the function defined as  
$f_{m}(u_{(m+3i)(\modx n)})=2$ for $i=0,\ldots,k$ and $f_m(v)=0$, otherwise. Clearly, $\{f_m: m=1,\ldots,n\}$ 
is a set of SNRDFs on $C_n$, and from Lemma~\ref{cotamuscycle} it follows that  
    \begin{equation}\label{cyine2}
            n\cdot\gamma_{snR}(C_n)\le \displaystyle\sum_{m=1}^n w(f_m) =  \displaystyle  2(k+1)
                    \sum_{j=1}^{n}\mu_s(u_{j})  \le  \displaystyle (2k+2)\left(q-(\mu_{\max}-\mu_{\min})\right)
    \end{equation} 
following that $\displaystyle \gamma_{snR}(C_n)\le \frac{2k+2}{n}\left(q-(\mu_{\max}-\mu_{\min})\right)=
\left(1-\frac{k}{n}\right)\left(q-(\mu_{\max}-\mu_{\min})\right)$.
\end{proof}

\begin{remark}\label{cuentas}
Let $n\ge 4$, $k\ge 1$ be two integers such that $n=3k+1$. Let $C_n=(V,\sigma,\mu)$ be a fuzzy strong cycle with 
$|\sigma^*|=n$, represented as $C_n:u_1,u_2,\ldots,u_n,u_1$. For every $m=1\ldots,n$, let $f_m$ 
be the function defined as $f_m(u_{m})=1$, $f_{m}(u_{(m+2+3i)(\modx n)})=2$ for $i=0,\ldots,k-1$ and $f_m(v)=0$, 
otherwise. Then $f_m$ are SNRDFs on $C_n$, for all $m$, and $w(f_m)-w(f_{(m+3) \, (\modx n)})=
\mu_s(u_m)-2\mu_s(u_{(m+1) \, (\modx n)})+2\mu_s(u_{(m+2)\, (\modx n)})-\mu_s(u_{(m+3) \, (\modx n)}).$
\end{remark}
\begin{proof}
    From now on, all subscripts will be considered modulo $n$. It is straightforward to check that $\{f_m:m=1,\ldots,n\}$ is a set of SNRDFs of $C_n$, because for every 
    $i=0,\ldots,k-1$, we have that $u_{m+1+3i}$ 
    and $u_{m+3+3i}$ are strong neighbors of $u_{m+2+3i}$. Moreover, 
    $$
        \begin{array}{rcl}
    w(f_m)-w(f_{m+3}) & = & \displaystyle \mu_s(u_m)+\sum_{i=0}^{k-1} 2\mu_s(u_{m+2+3i})
     \displaystyle -\mu_s(u_{m+3})-\sum_{i=0}^{k-1} 2\mu_s(u_{m+5+3i}) \\[2.5ex] 
    & = & \displaystyle \mu_s(u_m)+\sum_{i=0}^{k-1} 2\mu_s(u_{m+2+3i})           
     \displaystyle -\mu_s(u_{m+3})-\sum_{i=1}^{k} 2\mu_s(u_{m+2+3i})   \\[2.5ex] 
    & = & \mu_s(u_m)-\mu_s(u_{m+3})+2\mu_s(u_{m+2})   -2\mu_s(u_{m+1}).
    \end{array}$$  
\end{proof}

\begin{theorem}\label{cycle:n_not_3k}
    Let $n,k\ge 1$ be integers such that $n=3k+\ell$ with $\ell\in\{1,2\}$. Let $C_n=(V,\sigma,\mu)$ be 
    a fuzzy cycle with $|\sigma^*|=n$, represented as $C_n:u_1,u_2,\ldots,u_n,u_1$, with at least two edges with 
    minimum weight. Then $\displaystyle\gamma_{snR}(C_n)=\left(1-\frac{k}{n}\right)\left(q-(\mu_{\max}-\mu_{\min})\right)$ 
    if and only if at least $n-1$ edges have weight equal to $\mu_{\min}$ and at most one only edge has 
    weight equal to $\mu_{\max}$.
\end{theorem}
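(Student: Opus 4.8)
The plan is to combine the upper bound already established in Theorem~\ref{cyclecota} with a careful analysis of when the two inequalities used to derive it become equalities. Throughout I would write $a_i=\mu(u_i,u_{i+1})$ for the edge weights (all indices modulo $n$, so that $a_n=\mu(u_n,u_1)$) and $s_i=\mu_s(u_i)=\min\{a_{i-1},a_i\}$, the latter because $C_n$ is strong and hence $N_s(u_i)=\{u_{i-1},u_{i+1}\}$. Recall that Theorem~\ref{cyclecota} gives $\gamma_{snR}(C_n)\le \frac{n-k}{n}\big(q-(\mu_{\max}-\mu_{\min})\big)$, using $2k+1=n-k$ when $\ell=1$ and $2k+2=n-k$ when $\ell=2$.

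For the sufficiency I would assume, after relabelling, that $a_n=\mu_{\max}$ and $a_1=\cdots=a_{n-1}=\mu_{\min}$ with $\mu_{\max}>\mu_{\min}$. A one-line check gives $s_i=\mu_{\min}$ for every $i$, whence $q-(\mu_{\max}-\mu_{\min})=(n-1)\mu_{\min}+\mu_{\max}-\mu_{\max}+\mu_{\min}=n\mu_{\min}$ and the target value is $(n-k)\mu_{\min}$. Since every edge is strong, an SNRDF on $C_n$ is exactly a classical Roman dominating function of the underlying cycle graph, and its weight is $w(f)=\mu_{\min}\sum_i f(u_i)$. Minimising over such $f$ and invoking the classical value $\gamma_R(C_n)=n-\lfloor n/3\rfloor=n-k$ yields $\gamma_{snR}(C_n)=\mu_{\min}(n-k)$, which matches the bound.

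For the necessity I would start from the assumed equality and note that, since $\gamma_{snR}(C_n)\le \frac1n\sum_{m=1}^n w(f_m)=\frac{n-k}{n}\sum_i s_i\le \frac{n-k}{n}\big(q-(\mu_{\max}-\mu_{\min})\big)$ by Theorem~\ref{cyclecota} and Lemma~\ref{cotamuscycle}, both inequalities must be equalities. The first forces every $f_m$ to be a $\gamma_{snR}$-function, so all $w(f_m)$ coincide. Here the two residue classes diverge. For $\ell=2$ a direct telescoping gives $w(f_m)-w(f_{m+3})=2\big(s_m-s_{m+1}\big)$, because $3(k+1)\equiv 1\pmod n$; hence $s_m=s_{m+1}$ for all $m$ and $(s_i)$ is constant. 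For $\ell=1$, Remark~\ref{cuentas} only yields the second-order relation $s_m-2s_{m+1}+2s_{m+2}-s_{m+3}=0$ for all $m$; reading this as a circulant system, its symbol factors as $(\omega-1)(\omega^2-\omega+1)$, whose non-trivial roots are primitive sixth roots of unity. Since $n\equiv 1\pmod 3$ is not divisible by $6$, these are not $n$-th roots of unity, so the only $n$-periodic solutions are constant, and again $(s_i)$ is constant. As some edge equals $\mu_{\min}$, its endpoint has $s_i=\mu_{\min}$, so the common value is $\mu_{\min}$; thus $s_i=\mu_{\min}$ for all $i$.

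Finally, the second equality $\sum_i s_i=q-(\mu_{\max}-\mu_{\min})$ becomes $q=\sum_i a_i=(n-1)\mu_{\min}+\mu_{\max}$. Because every $a_i\ge \mu_{\min}$ while $\max_i a_i=\mu_{\max}$, one has $\sum_i a_i\ge (n-1)\mu_{\min}+\mu_{\max}$, with equality exactly when $n-1$ edges equal $\mu_{\min}$ and a single edge equals $\mu_{\max}$; since we are in the equality case, this is the configuration, as claimed. The main obstacle is the extraction of constancy of $(s_i)$ when $\ell=1$, where the available difference is only second order and the root-of-unity computation is essential; a secondary point to address is the degenerate case $\mu_{\max}=\mu_{\min}$ (all edges equal), which also attains equality but must be excluded from the statement, the clause ``at most one edge of weight $\mu_{\max}$'' being read as forcing $\mu_{\max}>\mu_{\min}$.
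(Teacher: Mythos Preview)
Your proof is correct and takes a cleaner route than the paper's. For sufficiency the paper bounds $\gamma_{snR}(C_n)$ from below by a case analysis on $|V_1|$ for an arbitrary $\gamma_{snR}$-function, whereas you observe that when every $s_i=\mu_{\min}$ the SNRDF weight is $\mu_{\min}$ times the ordinary label sum and invoke the classical value $\gamma_R(C_n)=n-\lfloor n/3\rfloor$. For necessity the paper only writes out $\ell=1$ explicitly, and does so via carefully chosen linear combinations of the $w(f_m)$, handled separately for $k$ even and $k$ odd, which eventually collapse to $\mu_s(u_j)=\mu_s(u_{j+1})$; your argument is both uniform and more conceptual. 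For $\ell=2$ your telescoping $w(f_m)-w(f_{m+3})=2(s_m-s_{m+1})$ (using $3(k+1)\equiv 1\pmod n$) is immediate and fills in the case the paper leaves implicit. For $\ell=1$ you correctly read Remark~\ref{cuentas} as the circulant relation $s_m-2s_{m+1}+2s_{m+2}-s_{m+3}=0$; since the symbol $-(\zeta-1)(\zeta^2-\zeta+1)$ vanishes only at $\zeta=1$ and at primitive sixth roots of unity, and $n\equiv 1\pmod 3$ is never divisible by $6$, the only $n$-periodic solution is constant. What your approach buys is brevity and a single argument covering both residues; what the paper's explicit combinations buy is that no spectral language is needed. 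Your closing remark is also well taken: the all-equal cycle does attain equality while failing the clause ``at most one edge has weight $\mu_{\max}$'', so the characterisation as literally phrased misses that degenerate case; reading it as ``at most one edge has weight different from $\mu_{\min}$'' repairs the statement.
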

\begin{proof}
   By Remark~\ref{cy2strong}, we know that $C_n$ is a fuzzy strong cycle. Without loss of generality, assume that 
   $\mu_{\max}=\mu(u_n,u_1)$.

   If at least $n-1$ edges have weight $\mu_{\min}$ and at most only one edge has weight $\mu_{\max}$, then 
   $\mu_s(u_i)=\mu_{\min}$, for $i=1,\ldots,n$ and $q=(n-1)\mu_{\min}+\mu_{\max}$, which implies that 
   $q-(\mu_{\max}-\mu_{\min})=n\mu_{\min}$. Thus, we only need to prove that $\gamma_{snR}(C_n)\ge 
   \left(1-\frac{k}{n}\right)n\mu_{\min}=(2k+\ell)\mu_{\min}$, because the other inequality holds by 
   Theorem~\ref{cyclecota}. Let $f$ be a $\gamma_{snR}(C_n)$-function with partition $(V_0,V_1,V_2)$. 
   Observe that 
   \begin{equation}\label{V2'} 
   \displaystyle 
        |V_2| \ge \left\lceil \frac{n-|V_1|}{3}\right\rceil 
            = k+1-\left\lfloor \frac{|V_1|+3-\ell}{3}\right\rfloor.
    \end{equation} 
If $|V_1|=0$ then from~(\ref{V2'}) it follows that $|V_2|\ge k+1$.
If $|V_1|=1$ and $\ell=2$ then again by~(\ref{V2'}) we have that $|V_2|\ge k+1$. So,
    $\gamma_{snR}(C_n)\ge 2\mu_{\min}|V_2|\ge 2(k+1)\mu_{\min}\ge (2k+1)\mu_{\min}$. 

\noindent If $|V_1|=1$ and $\ell=1$ then we may derive from~(\ref{V2'}) that $|V_2|\ge k$ and therefore
    $\gamma_{snR}(C_n)\ge 2\mu_{\min}|V_2|+\mu_{\min}|V_1|\ge 2k\mu_{\min}+\mu_{\min} = (2k+1)\mu_{\min}$. 
    
\noindent If $|V_1|\ge 2$, again from~(\ref{V2'}) we have   
   $$
   \begin{array}{rcl}
        \gamma_{snR}(C_n) & = & \displaystyle\sum_{u\in V_1}\mu_s(u)+2\sum_{u\in V_2}\mu_s(u)
                        = \left(|V_1|+2|V_2|\right)\mu_{\min}\\[2ex] 
                        & = & \displaystyle \left(|V_1|+2\left(k+1-\left\lfloor 
                        \frac{|V_1|+3-\ell}{3}\right\rfloor \right) \right)\mu_{\min}\\[2.5ex] 
                        & \ge & \displaystyle\left(2k+2+|V_1|-\frac{2}{3}(|V_1|+3-\ell) \right)\mu_{\min}\\[2.5ex] 
                        & = & \displaystyle\left(2k+\ell +\frac{|V_1|-\ell}{3} \right)\mu_{\min}
                         \ge  \displaystyle\left(2k+\ell \right)\mu_{\min}
    \end{array}$$
    because $|V_1|\ge 2\ge \ell.$ 
    
Now, let us show the converse. Asume that $\displaystyle\gamma_{snR}(C_n)=\left(1-\frac{k}{n}\right)\left(q-(\mu_{\max}-\mu_{\min})\right)$. 
First, suppose that $n=1 (\modx{3})$.  Then all the inequalities of~(\ref{cyine1}) in 
   Theorem~\ref{cyclecota} become equalities. In particular, all the functions of $\{f_m: m=1,\ldots,n\}$ are 
   $\gamma_{snR}(C_n)$-functions. Taking into account Lemma~\ref{cotamuscycle}, if we prove 
   that $\mu_s(u_j)=\mu_s(u_{(j+1)(\modx{n})})$ for all $j=1,\ldots,n$, then we are done. 
   From now on, all subscripts will be considered modulo $n$. We will distinguish two cases:

   {\it Case 1.} If $k$ is even. As $\{f_m:m=1,\ldots,n\}$ is a set of $\gamma_{snR}(C_n)$-functions, 
   all the functions have the same weight, $\gamma_{snR}(C_n)$, and we
   have 
   \begin{equation}\label{sumceros}
    \begin{array}{c}
        \displaystyle w(f_j)-w(f_{j+1}) -2\sum_{r=0}^{k/2-1}\Bigl( w(f_{j+2+6r})-
                    w(f_{j+5+6r})+w(f_{j+3+6r})-w(f_{j+6+6r}) \Bigr)   =  0,
    \end{array}
    \end{equation}
By considering Remark~\ref{cuentas} we also know that  
$$
    \begin{array}{lcl} 
        w(f_{j+2+6r})-w(f_{j+5+6r})  & = & \mu_s(u_{j+2+6r})-2\mu_s(u_{j+3+6r})+
                        2\mu_s(u_{j+4+6r})-\mu_s(u_{j+5+6r})                       \\[1.5ex] 
        w(f_{j+3+6r})-w(f_{j+6+6r})  & = & \mu_s(u_{j+3+6r})-2\mu_s(u_{j+4+6r})+
                        2\mu_s(u_{j+5+6r})-\mu_s(u_{j+6+6r}) 
    \end{array}
$$ 
Therefore, the equation~(\ref{sumceros}) may be reduced to
\begin{equation}\label{sumceros2}
    \begin{array}{c}
        \displaystyle w(f_j)-w(f_{j+1}) -2\sum_{r=0}^{k/2-1}
        \Bigl( \mu_s(u_{j+2+6r})-\mu_s(u_{j+3+6r})+\mu_s(u_{j+5+6r})-\mu_s(u_{j+6+6r}) \Bigr)   =  0,
    \end{array}
    \end{equation}
Besides, if we rewrite the summands in this way,
$$
    \begin{array}{c}
        \mu_s(u_{j+2+3(2r)})-\mu_s(u_{j+3+3(2r)})+\mu_s(u_{j+2+3(2r+1)})-\mu_s(u_{j+3+3(2r+1)}),
    \end{array}
$$
from~(\ref{sumceros2}) we can deduce
\begin{equation}\label{sumceros3}
    \begin{array}{c}
        \displaystyle w(f_j)-w(f_{j+1}) -2\sum_{i=0}^{k-1}\Bigl(\mu_s(u_{j+2+3i})-\mu_s(u_{j+3+3i})\Bigr) = 0 \\[2.5ex]
    \end{array}
\end{equation}
Finally, we have to keep in mind that
$ w(f_j)=\mu_s(u_j)+2\sum_{i=0}^{k-1} \mu_s(u_{j+2+3i})$
for all $j$. So, from~(\ref{sumceros3}) we derive 
$ \mu_s(u_j)-\mu_s(u_{j+1})=0,$
yielding that $\mu_s(u_j)=\mu_s(u_{j+1})$.

    {\it Case 2.} If $k$ is odd.  As $\{f_m:m=1,\ldots,n\}$ is a set of $\gamma_{snR}(C_n)$-functions, 
   all the functions have the same weight, $\gamma_{snR}(C_n)$, and therefore the following equalities hold 
   \begin{equation}\label{sumceros4}
    \begin{array}{rcl}
        -2 \Bigl( w(f_{j-2})-w(f_{j-1})\Bigr) + w(f_{j})-w(f_{j+1})  & =  & 0 \\[2.5ex]
        \displaystyle \sum_{r=0}^{\frac{k-1}{2}-1} \Bigl( -2 \left( w(f_{j+2+6r})-w(f_{j+5+6r})\right)
                        +2\left(w(f_{j+3+6r})-w(f_{j+6+6r}\right) \Bigr) & = & 0 \\[2.5ex]
        \displaystyle \sum_{r=0}^{\frac{k-1}{2}-1} 4 \Bigl( w(f_{j+4+6r})-w(f_{j+7+6r}) \Bigr)  & = & 0
    \end{array}
    \end{equation}
By considering Remark~\ref{cuentas} we also know that  
\begin{equation}\label{delocos}
    \begin{array}{c} 
        w(f_{j+2+6r})-w(f_{j+5+6r})   = \mu_s(u_{j+2+6r})-2\mu_s(u_{j+3+6r})+
                        2\mu_s(u_{j+4+6r})-\mu_s(u_{j+5+6r})                       \\[1.5ex] 
        w(f_{j+3+6r})-w(f_{j+6+6r})   = \mu_s(u_{j+3+6r})-2\mu_s(u_{j+4+6r})+
                        2\mu_s(u_{j+5+6r})-\mu_s(u_{j+6+6r})  \\[1.5ex] 
        w(f_{j+4+6r})-w(f_{j+7+6r})   = \mu_s(u_{j+4+6r})-2\mu_s(u_{j+5+6r})+
                        2\mu_s(u_{j+6+6r})-\mu_s(u_{j+7+6r})  
    \end{array}
\end{equation} 
Therefore, adding the terms on the left-hand side of the three equations~(\ref{sumceros4}) and 
substituting the expressions~(\ref{delocos}) results in the following.
\begin{equation}\label{delocos2}
    \begin{array}{rcl} 
         0  &  =  & -2 \Bigl( w(f_{j-2})-w(f_{j-1})\Bigr) + w(f_{j})-w(f_{j+1}) + \\[2.5ex] 
         & & \displaystyle \sum_{i=0}^{k-2}\left(-2\mu_s(u_{j+2+3i})+6\mu_s(u_{j+3+3i})-4\mu_s(u_{j+4+3i})\right) 
    \end{array}    
\end{equation}
Finally, it is known that 
$ w(f_j)=\mu_s(u_j)+2\sum_{i=0}^{k-1} \mu_s(u_{j+2+3i})$
for each possible value of $j$. So, the equation~(\ref{delocos2}) can be easily reduced to  
$$
    \begin{array}{rl} 
     0 = & \displaystyle -2\mu_s(u_{j-2})-4\mu_s(u_{j})-\sum_{i=0}^{k-2}4\mu_s(u_{j+3+3i})+2\mu_s(u_{j-1})+4\mu_s(u_{j+1})+
                        \sum_{i=0}^{k-2}4\mu_s(u_{j+4+3i})   \\[2.5ex] 
         &  \displaystyle +\mu_s(u_{j})+2\mu_s(u_{j-2})+ \sum_{i=0}^{k-2}2\mu_s(u_{j+2+3i})-\mu_s(u_{j+1})-
       2\mu_s(u_{j-1})-\sum_{i=0}^{k-2}2\mu_s(u_{j+3+3i})  +   \\[2.5ex] 
         &  \displaystyle \sum_{i=0}^{k-2}\left(-2\mu_s(u_{j+2+3i})+6\mu_s(u_{j+3+3i})-4\mu_s(u_{j+4+3i})\right)  \\[3ex]
       = & -3\mu_s(u_j)+3\mu_s(u_{j+1}) ,
     \end{array}
$$ 
yielding that $\mu_s(u_j)=\mu_s(u_{j+1})$, which finishes the proof.

\end{proof}

We may think that one of the functions defined in the proof of Theorem~\ref{cyclecota} could lead us to the exact 
value of the strong-neighbors Roman domination number of a cycle. However, that is not always the case. The influence 
of the weight of the strong edges is such that it is difficult to find a valid strategy to obtain the strong-neighbors 
Roman domination number of any cycle. In Figure~\ref{C6Algeciras} we can observe an example illustrating the described 
situation. The strong-neighbors Roman domination number of the cycle $C_6$ is equal to $0.34$, and it is obtained by 
defining the function $f:V\rightarrow \{0,1,2\}$ as $f(u_1)=f(u_5)=2$, $f(u_3)=1$ and $f(u_2)=f(u_4)=f(u_6)=0$. This 
function has a weight $w(f)=0.34$, whereas the functions $f_m$,  defined as in Theorem~\ref{cyclecota} have weights 
$w(f_m)=0.62$, $m=1,\ldots ,6$.

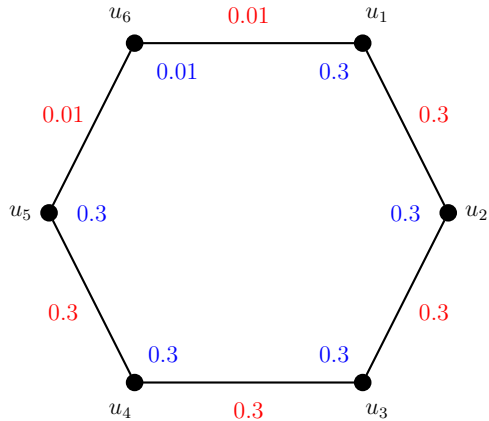
\begin{figure}[ht]
\centering
\begin{tikzpicture}[scale=.75, transform shape, style=thick]
\tikzstyle{vert_black}=[fill=black, draw=black, shape=circle, scale=0.75]
\tikzstyle{none}=[fill=none, draw=none, shape=circle]
\tikzstyle{vert_dest}=[fill=none, draw=red, shape=circle, scale=2]
	
		\node [style={vert_black}] (0) at (-4, 6) {};
		\node [style={vert_black}] (1) at (0, 6) {};
		\node [style={vert_black}] (2) at (-5.5, 3) {};
		\node [style={vert_black}] (3) at (-4, 0) {};
		\node [style={vert_black}] (4) at (0, 0) {};
		\node [style={vert_black}] (5) at (1.5, 3) {};
		\node [style=none] (6) at (-3.25, 5.5) {\color{blue} \large $0.01$};
		\node [style=none] (7) at (-0.5, 5.5) {\color{blue} \large $0.3$};
		\node [style=none] (8) at (0.75, 3) {\color{blue} \large $0.3$};
		\node [style=none] (9) at (-0.5, 0.5) {\color{blue} \large $0.3$};
		\node [style=none] (10) at (-3.5, 0.5) {\color{blue} \large $0.3$};
		\node [style=none] (11) at (-4.75, 3) {\color{blue} \large $0.3$};
		\node [style=none] (12) at (-2, 6.5) {\color{red} \large $0.01$};
		\node [style=none] (13) at (1.25, 4.75) {\color{red} \large $0.3$};
		\node [style=none] (14) at (1.25, 1.25) {\color{red} \large $0.3$};
		\node [style=none] (15) at (-2, -0.5) {\color{red} \large $0.3$};
		\node [style=none] (16) at (-5.25, 1.25) {\color{red} \large $0.3$};
		\node [style=none] (17) at (-5.25, 4.75) {\color{red} \large $0.01$};
		\node [style=none] (18) at (0.25, 6.5) {\large $u_1$};
		\node [style=none] (19) at (2, 3) {\large $u_2$};
		\node [style=none] (20) at (0.25, -0.5) {\large $u_3$};
		\node [style=none] (21) at (-4.25, -0.5) {\large $u_4$};
		\node [style=none] (22) at (-6, 3) {\large $u_5$};
		\node [style=none] (23) at (-4.25, 6.5) {\large $u_6$};

            \draw (5) to (4);
		\draw (4) to (3);
		\draw (3) to (2);
		\draw (2) to (0);
		\draw (0) to (1);
		\draw (1) to (5);
\end{tikzpicture}
   \vspace{-.25cm} \caption{A cycle for which the functions defined in Th.~\ref{cyclecota} are not optimal. }\label{C6Algeciras}
\end{figure}

The following result provides an upper bound for the SNRD number of fuzzy paths.

\begin{theorem}\label{pathcota}
    Let $n\ge 6$ be an integer and $P_n=(V,\sigma,\mu)$ be a fuzzy path with $|\sigma^*|=n$, 
    represented as $P_n:u_1,u_2,\ldots,u_n.$ 
    Then $\gamma_{snR}(P_n)\le \frac{2}{3}\left(q+\mu_{\min}+\mu_{\max}\right).$
\end{theorem}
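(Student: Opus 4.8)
The plan is to combine a sharp bound on $\sum_{i=1}^n \mu_s(u_i)$ with an averaging argument over three periodic labelings, mirroring the strategy used for cycles in Theorem~\ref{cyclecota}. Since every fuzzy path is strong, all $n-1$ edges are strong; write $e_i=\mu(u_i,u_{i+1})$ for $1\le i\le n-1$, so that $q=\sum_{i=1}^{n-1}e_i$, while $\mu_s(u_1)=e_1$, $\mu_s(u_n)=e_{n-1}$, and $\mu_s(u_i)=\min\{e_{i-1},e_i\}$ for every interior vertex. In particular $e_1,e_{n-1}\le\mu_{\max}$.

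First I would prove the auxiliary inequality $\sum_{i=1}^n \mu_s(u_i)\le q+\mu_{\min}$; I expect this to be the main obstacle, since the naive bound $\mu_s(u_i)\le e_{i-1}$ only yields $q+e_1$, which is too weak. The natural route is induction on $n$: deleting $u_n$ yields a path $P_{n-1}$ on edges $e_1,\dots,e_{n-2}$ in which $u_{n-1}$ becomes an endpoint, so $\mu'_s(u_{n-1})=e_{n-2}$ while the remaining $\mu'_s$ values are unchanged. Writing $\sum_{i=1}^n\mu_s(u_i)=\sum_{i=1}^{n-1}\mu'_s(u_i)-e_{n-2}+\min\{e_{n-2},e_{n-1}\}+e_{n-1}$ and applying the hypothesis $\sum\mu'_s\le q'+\mu'_{\min}$, the claim reduces to $\mu'_{\min}-e_{n-2}+\min\{e_{n-2},e_{n-1}\}\le\mu_{\min}$, where $\mu_{\min}=\min\{\mu'_{\min},e_{n-1}\}$. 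This is a short case analysis on whether $e_{n-1}\ge e_{n-2}$ and on how $e_{n-1}$ compares with $\mu'_{\min}$, each case collapsing by a simple cancellation; the base case $n=3$ gives equality.

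Next I would build three SNRDFs $f^{(0)},f^{(1)},f^{(2)}$, where $f^{(a)}$ assigns the label $2$ to every vertex $u_i$ with $i\equiv a\pmod 3$ and $0$ elsewhere. Every interior $0$-vertex is dominated because among any three consecutive positions exactly one lies in each residue class. Only the endpoints can fail: a brief check shows $u_1$ is undominated solely in the phase $a\equiv 0$, and $u_n$ solely in the phase $a\equiv n+1\pmod 3$. In each such phase I repair the labeling by raising the offending endpoint from $0$ to $1$, which keeps $f^{(a)}$ an SNRDF (adding a label $1$ never affects the domination of other vertices) and increases its weight by exactly $\mu_s(u_1)$, respectively $\mu_s(u_n)$. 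Since each vertex receives the label $2$ in precisely one of the three phases, summing gives $\sum_{a=0}^{2}w(f^{(a)})=2\sum_{i=1}^n\mu_s(u_i)+\mu_s(u_1)+\mu_s(u_n)$.

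Finally, averaging yields $\gamma_{snR}(P_n)\le\min_a w(f^{(a)})\le\frac13\bigl(2\sum_{i=1}^n\mu_s(u_i)+\mu_s(u_1)+\mu_s(u_n)\bigr)$. Substituting the auxiliary bound $\sum\mu_s\le q+\mu_{\min}$ together with $\mu_s(u_1)+\mu_s(u_n)=e_1+e_{n-1}\le 2\mu_{\max}$ gives $\gamma_{snR}(P_n)\le\frac13\bigl(2(q+\mu_{\min})+2\mu_{\max}\bigr)=\frac23\left(q+\mu_{\min}+\mu_{\max}\right)$, as required. The hypothesis $n\ge 6$ guarantees each phase already carries at least two labels $2$, so the three repaired functions are genuine SNRDFs and the averaging is non-degenerate.
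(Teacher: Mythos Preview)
Your argument is correct and follows the same averaging-over-three-periodic-labelings idea as the paper, but your execution is somewhat cleaner. The paper defines three SNRDFs $f_1,f_2,f_3$ with case-dependent endpoint adjustments based on $n\bmod 3$, then bounds $\sum_{m}w(f_m)$ directly by splitting the index range at the positions $\ell,\ell^*$ of a minimum and a maximum edge (handling the constant-weight case separately). You instead take the three pure residue classes with at most two endpoint repairs, obtain the exact identity $\sum_a w(f^{(a)})=2\sum_i\mu_s(u_i)+\mu_s(u_1)+\mu_s(u_n)$, and then invoke the auxiliary bound $\sum_i\mu_s(u_i)\le q+\mu_{\min}$, which you prove by a short induction. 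This auxiliary bound is exactly the paper's Lemma~\ref{cotamuspath}(i), stated and proved (directly, not inductively) \emph{after} Theorem~\ref{pathcota}; by isolating it up front you avoid the paper's separate treatment of the constant-weight case and its split at $\ell^*$. One small remark: your final sentence about $n\ge 6$ ensuring ``at least two labels $2$'' is superfluous---your construction already yields valid SNRDFs and the averaging works for every $n\ge 3$.
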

\begin{proof}
Recall that all the edges of every path are strong. When all the edges of $P_n$ have the same weight,
studying the strong-neighbors Roman domination number of a fuzzy path $P_n$ with $|\sigma^*|=n$ is
equivalent to studying the Roman domination number of a path $P_n$ with $n$ vertices and non-weighted edges. 
In this last case, it was proven in~\cite{CDHH04} that $\gamma_R(P_n)=\left\lceil\frac{2n}{3}\right\rceil$. 
Thus, as the size of a fuzzy path $P_n$ with $|\sigma^*|=n$ and all its edges with the same weight, 
$\mu_{\min}$, is $q=(n-1)\mu_{\min}$, we have 
$$\gamma_{snR}(P_n)=\left\lceil\frac{2n}{3}\right\rceil\mu_{\min}\le \frac{2n+2}{3}\mu_{\min}=
\frac{2}{3}(q+2\mu_{\min})=\frac{2}{3}(q+\mu_{\min}+\mu_{\max}).$$ Then, in the rest of the proof 
we assume that $\mu_{\min}<\mu_{\max}$. Let $\ell,\ell^*\in\{1,\ldots,n-1\}$ be two integers such that 
$\mu(u_{\ell},u_{\ell+1})=\mu_{\min}$ and $\mu(u_{\ell^*},u_{\ell^*+1})=\mu_{\max}$.  As $P_n$ is 
an undirected fuzzy graph, without loss of generality we may assume that $\ell<\ell^*$. 

Denote by $k=\lfloor n/3\rfloor$. Let $f_1:V\rightarrow \{0,1,2\}$ be the function defined as 
$f_1(u_{1+3i})=2$  for $i=0,\ldots,k-1,$ $f_1({u_n})=1$ when $n=0 \, (\modx 3),$ $f_1(u_{n-1})=2$
when $n=1,2\,  (\modx 3)$ and $f_1(v)=0$ otherwise. 
Let $f_2:V\rightarrow \{0,1,2\}$ be the function defined as $f_2(u_{2+3i})=2$  for 
$i=0,\ldots,k-1,$ $f_2(u_{n})=1$ when $n=1 \, (\modx 3),$ $f_2(u_{n-1})=2$ when $n=2\, (\modx 3)$ 
and $f_2(v)=0$ otherwise.
And let $f_3:V\rightarrow \{0,1,2\}$ be the function defined as $f_3(u_{3+3i})=2$  for 
$i=0,\ldots,k-2,$ $f_3({u_1})=1$ when $n=0,1,2 \, (\modx 3),$ $f_3({u_{n-1}})=2$ when $n=0,1 \, (\modx 3),$ 
$f_3({u_{n-2}})=2$ when $n=2 \, (\modx 3),$ $f_3({u_n})=1$ when $n=2 \, (\modx 3)$ and $f_3(v)=0$ otherwise.
Clearly, $\{f_m: m=1,\ldots,3\}$ is a set of SNRDFs on $P_n$. If $\ell^*=n-1$ then
\begin{equation}\label{sum3fmn-1}
\begin{array}{rcl}
    \displaystyle 3\cdot\gamma_{snR}(P_n) & \le & \displaystyle \sum_{m=1}^3 w(f_m) 
        =  \displaystyle
    \mu_s(u_1)+\sum_{i=1}^{n-1} 2\mu_s(u_{i})+2\mu_s(u_{n-1})+\mu_s(u_n)\\[2.5ex] 
        & = & \displaystyle \mu_s(u_1)+\sum_{i=1}^{\ell} 2\mu_s(u_{i})+
    \sum_{i=\ell+1}^{n-1} 2\mu_s(u_{i})+2\mu_s(u_{n-1})+\mu_s(u_n)\\[2.5ex] 
        & \le & \displaystyle \sum_{i=1}^{\ell} 2\mu(u_{i},u_{i+1})+\sum_{i=\ell+1}^{n-1}2\mu(u_{i-1},u_i)+
    4\mu_{\max}\\[2.5ex] & = & 2q-2\mu_{\max}+2\mu_{\min}+4\mu_{\max}
         =  2\left(q+\mu_{\min}+\mu_{\max}\right).
\end{array}
\end{equation}
If $\ell^*\le n-2$ then, reasoning as before, we have \begin{equation}\label{sum3fm}
\begin{array}{rcl}
    \displaystyle 3\cdot\gamma_{snR}(P_n) & \le & \displaystyle \sum_{m=1}^3 w(f_m) 
         =  \displaystyle \mu_s(u_1)+\sum_{i=1}^{n-1} 2\mu_s(u_{i})+2\mu_s(u_{n-1})+\mu_s(u_n)\\[2.5ex] 
        & \le & \displaystyle \sum_{i=1}^{\ell} 2\mu(u_{i},u_{i+1})+\sum_{i=\ell+1}^{\ell^*}2
        \mu(u_{i-1},u_i)+\sum_{i=\ell^*+1}^{n-1} 2\mu(u_{i},u_{i+1}) +4\mu_{\max}\\[2.5ex] 
        & = & 2q-2\mu_{\max}+2\mu_{\min}+4\mu_{\max}
         =  2\left(q+\mu_{\min}+\mu_{\max}\right).
\end{array}
\end{equation}
This finishes the proof.
\end{proof}

In the next lemma we identify certain characteristics that a fuzzy path must satisfy 
when the inequality presented in the previous theorem is, in fact, an equality.

\begin{lemma}\label{cotamuspath}
    Let $n\ge 3$ be an integer and let $P_n:(V,\sigma,\mu)$ be a fuzzy  path 
    with $|\sigma^*|=n$, represented as $P_n:u_1,u_2,\ldots,u_n$. The following assertions hold,
    \begin{itemize}
        \item[(i)] $\displaystyle \sum_{i=1}^n\mu_s(u_i)\le q+\mu_{min}.$
        \item[(ii)] $\displaystyle \sum_{i=1}^n\mu_s(u_i)= q+\mu_{min}$ if and only if 
        there exist $\ell,\ell^*\in\{1,\ldots,n-1\}$ such that \linebreak 
        $\mu_{\min}=\mu(u_{\ell},u_{\ell+1})$ and $\mu_{\max}=\mu(u_{\ell^*},u_{\ell^*+1})$, 
        and further, $\displaystyle \left\{\mu(u_i,u_{i+1})\right\}_{i=1}^\ell$ and 
        \linebreak $\displaystyle \left\{\mu(u_i,u_{(i+1)})\right\}_{i=\ell^*}^{n-1}$ 
        are non-increasing sequences, and 
        $\displaystyle \left\{\mu(u_i,u_{(i+1)})\right\}_{i=\ell}^{\ell^*}$ 
        is a non-decrea\-sing sequence.  
    \end{itemize} 
\end{lemma}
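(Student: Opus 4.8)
The plan is to reduce everything to an inequality about the edge weights, exactly as in Lemma~\ref{cotamuscycle}. Write $a_i := \mu(u_i,u_{i+1})$ for $i=1,\dots,n-1$. Since every edge of a path is strong, the strong-edge minima are completely explicit: $\mu_s(u_1)=a_1$, $\mu_s(u_n)=a_{n-1}$, and $\mu_s(u_i)=\min\{a_{i-1},a_i\}$ for each interior vertex $2\le i\le n-1$, while $q=\sum_{i=1}^{n-1}a_i$. Both assertions then become statements about these $n-1$ numbers.

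For (i), I would fix an index $\ell$ realizing the minimum, $a_\ell=\mu_{\min}$, and split the sum at $\ell$, bounding each term by a single adjacent edge: $\mu_s(u_i)\le a_i$ for $1\le i\le \ell$ (an equality at $i=1$) and $\mu_s(u_i)\le a_{i-1}$ for $\ell+1\le i\le n$ (an equality at $i=n$). Summing and reindexing the second block by $j=i-1$ gives
\[
\sum_{i=1}^n\mu_s(u_i)\ \le\ \sum_{i=1}^{\ell}a_i+\sum_{j=\ell}^{n-1}a_j\ =\ \Big(\sum_{i=1}^{n-1}a_i\Big)+a_\ell\ =\ q+\mu_{\min},
\]
because the two blocks overlap exactly in the term $a_\ell$. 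This is the bound in (i), and it is where the whole argument hinges: splitting at a \emph{minimum} edge is precisely what makes the doubled term equal to $\mu_{\min}$.

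For the forward direction of (ii), equality in the displayed chain forces every term-by-term inequality to be tight. Tightness of $\mu_s(u_i)=a_i$ for $2\le i\le \ell$ says $a_{i-1}\ge a_i$, i.e. $\{a_i\}_{i=1}^{\ell}$ is non-increasing, and tightness of $\mu_s(u_i)=a_{i-1}$ for $\ell+1\le i\le n-1$ says $a_i\ge a_{i-1}$, i.e. $\{a_i\}_{i=\ell}^{n-1}$ is non-decreasing. Thus equality produces a single ``valley'' profile: non-increasing down to the minimum at $\ell$, then non-decreasing, whose maximum necessarily sits at an end of this monotone region. Orienting the path so that $\mu_{\max}$ is carried by the terminal edge (taking $\ell^*$ to be the right-hand end of the non-decreasing block) turns these two monotonicities into precisely the three stated blocks, the block $\{a_i\}_{i=\ell^*}^{n-1}$ reducing to a single, trivially non-increasing term. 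The converse is a direct computation: assuming the stated monotonicities, each $\min\{a_{i-1},a_i\}$ collapses to the predicted neighbour, and summing recovers $q+\mu_{\min}$, mirroring the sufficiency part of Lemma~\ref{cotamuscycle}.

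I expect the real work to be in (ii), specifically in reconciling the three-block description with the tightness conditions. The equality profile is intrinsically a single valley, so the distinguished maximising index $\ell^*$ is not genuinely free to sit in the interior but must be pinned to the boundary of the monotone blocks; resolving this forces a choice of orientation of the path (relabelling $u_i\mapsto u_{n+1-i}$ if necessary so that a maximum edge is terminal). Handling multiple minima or maxima, and verifying that the degenerate single-term blocks are vacuously monotone, is the bookkeeping that must be carried out with care, whereas the inequality (i) itself is routine once the split point is correctly placed at a minimum edge.
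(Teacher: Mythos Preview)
Your approach is correct and essentially the same as the paper's: both split $\sum_i \mu_s(u_i)$ at an index $\ell$ realizing $\mu_{\min}$, bound each term by the appropriate adjacent edge weight, and then read off the monotonicity conditions from term-by-term tightness. The only cosmetic difference is that the paper introduces $\ell^*$ from the outset (assuming WLOG $\ell<\ell^*$) and case-splits on whether $\ell^*=n-1$ or $\ell^*\le n-2$, producing two parallel chains of inequalities, whereas you work with the single split at $\ell$ and only bring in $\ell^*$ at the end of (ii); your version is marginally more streamlined, and you also sketch the converse of (ii), which the paper leaves implicit.
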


\begin{proof}
$(i)$    Let $\ell,\ell^*\in\{1,\ldots,n-1\}$ be integers such that 
$\mu_{\min}=\mu(u_\ell,u_{\ell+1})$ and $\mu_{\max}=\mu(u_{\ell^*},u_{\ell^*+1})$. 
Without loss of generality we may assume that $\ell<\ell^*$. \newline
If $\ell^*=n-1$ then 
\begin{equation}\label{jp1} 
    \begin{array}{rcl}
        \displaystyle \sum_{i=1}^{n}\mu_s(u_i) & = & \displaystyle\sum_{i=1}^{\ell}
            \mu_s(u_i)+\sum_{i=\ell+1}^{n}\mu_s(u_i)      \\[2.5ex] 
        & \le &  \displaystyle\sum_{i=1}^{\ell} \mu(u_i,u_{i+1})+
            \sum_{i=\ell+1}^{n}\mu(u_{i-1},u_i)      
             =  q+\mu_{min}.
    \end{array}
\end{equation} 
If $\ell^*\le n-2$ then 
\begin{equation}\label{jpge2} 
    \begin{array}{rcl}
    \displaystyle \sum_{i=1}^{n}\mu_s(u_i) & = & 
    \displaystyle\sum_{i=1}^{\ell}\mu_s(u_i)+\sum_{i=\ell+1}^{\ell^*}\mu_s(u_i)+
                    \sum_{i=\ell^*+1}^n\mu_s(u_i)           \\[2.5ex] 
    & \le &  \displaystyle\sum_{i=1}^{\ell}
    \mu(u_i,u_{i+1})+\sum_{i=\ell+1}^{\ell^*}\mu(u_{i-1},u_i)+
    \sum_{i=\ell^*+1}^{n-1}\mu(u_i,u_{i+1})+\mu_{\max}     
     =  q+\mu_{min},
    \end{array}
\end{equation}
which conclude the proof of item $(i)$.

$(ii)$ Suppose that $\displaystyle \sum_{i=1}^n\mu_s(u_i)= q+\mu_{min}$. 

If $\ell^*=n-1$ then all the inequalities of~(\ref{jp1}) become equalities. 
Thus, from $\mu_s(u_i)=\mu(u_i,u_{i+1})$ for $1\le i \le \ell$, it follows that 
$\mu(u_{i-1},u_{i})\ge \min\{\mu(u_{i-1},u_{i}), \mu(u_i,u_{i+1})\} = \mu_s(u_i)=  
\mu(u_i,u_{i+1})$, for $2\le i\le \ell$. And also from $\mu_s(u_i)=\mu(u_{i-1},u_{i})$ 
for $\ell+1\le i \le n$, it follows that $\mu(u_{i-1},u_{i})=\mu_s(u_i)=
\min\{\mu(u_{i-1},u_{i}), \mu(u_i,u_{i+1})\}\le \mu(u_{i},u_{i+1})$, 
for $\ell+1\le i\le n-1$.

If $\ell^*\le n-2,$ similarly to the previous reasoning, all the inequalities 
of~(\ref{jpge2}) become equalities and therefore, from $\mu_s(u_i)=\mu(u_i,u_{i+1})$ 
for $1\le i \le \ell$ and $\ell^*+1\le i\le n-1$, it follows that 
$\mu(u_{i-1},u_{i})\ge \min\{\mu(u_{i-1},u_{i}), \mu(u_i,u_{i+1})\} = \mu_s(u_i)= 
\mu(u_i,u_{i+1})$ for $2\le i\le \ell$ and $\ell^*+1\le i\le n-1$. And from 
$\mu_s(u_i)=\mu(u_{i-1},u_{i})$ for $\ell+1\le i \le \ell^*$, it follows that
$\mu(u_{i-1},u_{i})=\mu_s(u_i)=\min\{\mu(u_{i-1},u_{i}), \mu(u_i,u_{i+1})\}\le
\mu(u_{i},u_{i+1})$, for $\ell+1\le i\le \ell^*.$ This finishes the proof.
\end{proof}

\begin{theorem}
Let $n\ge 3$ be an integer and let $P_n:(V,\sigma,\mu)$ be a fuzzy strong path
with $|\sigma^*|=n$, represented as $P_n:u_1,u_2,\ldots,u_n$. If 
$\gamma_{snR}(P_n) = \frac{2}{3}\left(q+\mu_{\min}+\mu_{\max}\right)$ then the 
following assertions hold,
\begin{itemize}
    \item[(i)] There exist $\ell,\ell^*\in\{1,\ldots,n-1\}$ such that 
    $\mu_{\min}=\mu(u_{\ell},u_{\ell+1})$ and $\mu_{\max}=\mu(u_{\ell^*},u_{\ell^*+1})$, 
    and further, $\displaystyle 
    \left\{\mu(u_i,u_{i+1})\right\}_{i=1}^\ell$ and $\displaystyle 
    \left\{\mu(u_i,u_{(i+1)})\right\}_{i=\ell^*}^{n-1}$ are non-increasing 
    sequences, and 
    \linebreak $\displaystyle \left\{\mu(u_i,u_{(i+1)})\right\}_{i=\ell}^{\ell^*}$ is 
    a non-decreasing sequence.
    \item[(ii)] $\mu_s(u_1) = \mu_s(u_2) = \mu_s(u_{n-1}) = \mu_s(u_{n})=\mu_{\max}.$
\end{itemize}
\end{theorem}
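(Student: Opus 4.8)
The plan is to read off the conclusion from the equality case of Theorem~\ref{pathcota}. First I would dispose of the degenerate situation $\mu_{\min}=\mu_{\max}$: then every edge has weight $\mu_{\min}$, so $\mu_s(u_i)=\mu_{\min}=\mu_{\max}$ for all $i$, and both (i) (a constant sequence is simultaneously non-increasing and non-decreasing, with $\ell,\ell^*$ chosen freely) and (ii) hold trivially. So I assume $\mu_{\min}<\mu_{\max}$, which is exactly the regime in which the functions $f_1,f_2,f_3$ of the proof of Theorem~\ref{pathcota} are defined and in which the chains~(\ref{sum3fmn-1}) and~(\ref{sum3fm}) are valid. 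Since $\gamma_{snR}(P_n)=\tfrac{2}{3}(q+\mu_{\min}+\mu_{\max})$, the estimate $3\gamma_{snR}(P_n)\le\sum_{m=1}^3 w(f_m)\le 2(q+\mu_{\min}+\mu_{\max})$ collapses, forcing every inequality in the relevant chain to be an equality.

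The key observation is that the middle expression $\mu_s(u_1)+2\sum_{i=1}^{n-1}\mu_s(u_i)+2\mu_s(u_{n-1})+\mu_s(u_n)$ separates into an \emph{edge-matched} part $2\sum_{i=1}^{n-1}\mu_s(u_i)$, bounded termwise by the edge sum, and an \emph{excess} part $\mu_s(u_1)+2\mu_s(u_{n-1})+\mu_s(u_n)$, whose four unit contributions are each bounded by $\mu_{\max}$ and thereby produce the trailing $4\mu_{\max}$. Equality in the excess bound forces $\mu_s(u_1)=\mu_s(u_{n-1})=\mu_s(u_n)=\mu_{\max}$, while equality in the edge-matched part forces $\sum_{i=1}^{n-1}\mu_s(u_i)=q-\mu_{\max}+\mu_{\min}$; adding $\mu_s(u_n)=\mu_{\max}$ gives $\sum_{i=1}^{n}\mu_s(u_i)=q+\mu_{\min}$. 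Part (i) then follows verbatim from the equality characterization in Lemma~\ref{cotamuspath}(ii).

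It remains to establish the single value $\mu_s(u_2)=\mu_{\max}$ asserted in (ii). I would stress that this does \emph{not} follow from (i) alone: a non-increasing sequence starting at $\mu(u_1,u_2)=\mu_{\max}$ may strictly drop already at $\mu(u_2,u_3)$, so $\mu_s(u_2)=\mu(u_2,u_3)$ need not be maximal. The way out is to exploit that $f_1,f_2,f_3$ are aligned to the right end, so the excess sits at $u_1,u_{n-1},u_n$ rather than symmetrically. Reversing the labelling $v_j:=u_{n+1-j}$ leaves $P_n$, $q$, $\mu_{\min}$, $\mu_{\max}$ and $\gamma_{snR}(P_n)$ unchanged, so running the identical argument on the reversed path yields $\mu_s(u_2)=\mu_s(v_{n-1})=\mu_{\max}$ (redundantly recovering $\mu_s(u_1)=\mu_s(u_n)=\mu_{\max}$). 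Combining the two runs gives $\mu_s(u_1)=\mu_s(u_2)=\mu_s(u_{n-1})=\mu_s(u_n)=\mu_{\max}$, which is (ii).

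The main obstacle is the bookkeeping behind the split in the second paragraph: one must confirm that the coefficients of $\mu_s(u_1),\dots,\mu_s(u_n)$ in $\sum_{m=1}^3 w(f_m)$ are exactly $3,2,\dots,2,4,1$, so that precisely four surplus units remain to be matched against $4\mu_{\max}$, uniformly in $n \bmod 3$ and in whether $\ell^*=n-1$ or $\ell^*\le n-2$. The pleasant point is that the very asymmetry of this coefficient pattern is what makes the reversal step supply the otherwise-missing equality at $u_2$.
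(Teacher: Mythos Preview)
Your proposal is correct and follows essentially the same route as the paper: both arguments feed the hypothesis into the chain $3\gamma_{snR}(P_n)\le\sum_{m=1}^3 w(f_m)\le 2(q+\mu_{\min}+\mu_{\max})$ from Theorem~\ref{pathcota}, force all inequalities to collapse, invoke Lemma~\ref{cotamuspath}(ii) to obtain~(i), and then run the identical estimate on the reversed path to capture the endpoint information missing from the first pass.

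The only organizational difference is in how~(ii) is read off. You split the middle expression cleanly into an edge-matched block and a four-unit excess, so that equality immediately yields $\mu_s(u_1)=\mu_s(u_{n-1})=\mu_s(u_n)=\mu_{\max}$ and $\sum_{i=1}^n\mu_s(u_i)=q+\mu_{\min}$ from a single run, with the reversal supplying only $\mu_s(u_2)$. The paper instead rewrites the same chain as the inequality $2\sum_i\mu_s(u_i)\ge 2(q+\mu_{\min}+\mu_{\max})-\mu_s(u_1)-2\mu_s(u_{n-1})+\mu_s(u_n)$, pairs it with its reversed twin, \emph{adds} the two to first obtain $\sum_i\mu_s(u_i)=q+\mu_{\min}$, and only then substitutes back to extract $\mu_s(u_2)=\mu_s(u_{n-1})=\mu_{\max}$ (with $\mu_s(u_1),\mu_s(u_n)$ recovered last via $\mu_s(u_1)\ge\mu_s(u_2)$). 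Your decomposition is the more transparent of the two, but the underlying content is identical.
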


\begin{proof}
Reasoning as in Theorem~\ref{pathcota}, we obtain

$$
\begin{array}{rcl}
    \displaystyle 3\gamma_{snR}(P_n) & \le & 
    \displaystyle\sum_{i=1}^{n-1} 2\mu_s(u_{i})+\mu_s(u_1)+2\mu_s(u_{n-1})+
    \mu_s(u_n) \\[2.ex]
    &= & \displaystyle 2\sum_{i=1}^{n} \mu_s(u_{i})+\mu_s(u_1)+
    2\mu_s(u_{n-1})-\mu_s(u_n). 
\end{array}
$$

Thus,

$3\gamma_{snR}(P_n) =2\left(q+\mu_{\min}+\mu_{\max}\right) \le
\displaystyle 2\sum_{i=1}^{n} \mu_s(u_{i})+\mu_s(u_1)+2\mu_s(u_{n-1})-\mu_s(u_n)$ 

yielding that  
\begin{equation}\label{pn}
2\sum_{i=1}^{n} \mu_s(u_{i})\ge 2\left(q+\mu_{\min}+\mu_{\max}\right)
-\mu_s(u_1)-2\mu_s(u_{n-1})+\mu_s(u_n). 
\end{equation}

Now, we consider the fuzzy strong path $P^*_n: u_n,u_{n-1},\ldots,u_1.$ 
Clearly, $\gamma_{snR}(P^*_n) =\gamma_{snR}(P_n) = \frac{2}{3}\left(q+\mu_{\min}+\mu_{\max}\right)$. Then

\begin{equation}\label{pn*}
2\sum_{i=1}^{n} \mu_s(u_{i})\ge 2\left(q+\mu_{\min}+\mu_{\max}\right)
-\mu_s(u_n)-2\mu_s(u_{2})+\mu_s(u_1). 
\end{equation}

From inequalities (\ref{pn}) and (\ref{pn*}), it follows that
$$
\begin{array}{rcl}
\displaystyle
4\sum_{i=1}^{n} \mu_s(u_{i}) & \ge & 
\displaystyle
4\left(q+\mu_{\min}+\mu_{\max}\right)-2\mu_s(u_{n-1})-2\mu_s(u_{2})\\[2.ex] 
\end{array}
$$

and therefore, 
$$ \displaystyle
\sum_{i=1}^{n} \mu_s(u_{i})\ge q+\mu_{\min}+\mu_{\max}
-\frac{\mu_s(u_{n-1})+\mu_s(u_{2})}{2}\ge q+\mu_{\min},
$$ 
because $\displaystyle \frac{\mu_s(u_{n-1})+\mu_s(u_{2})}{2}\le \mu_{\max}$. 
Then, by applying  Lemma~\ref{cotamuspath}, item $(i)$ follows.

Furthermore, since $\displaystyle q+\mu_{\min}=\sum_{i=1}^{n} \mu_s(u_{i})$ 
then inequality (\ref{pn}) leads us to 
$$
2q+2\mu_{\min}=2\sum_{i=1}^{n} \mu_s(u_{i})\ge 2\left(q+\mu_{\min}+\mu_{\max}\right)
-\mu_s(u_1)-2\mu_s(u_{n-1})+\mu_s(u_n),
$$
following $2\mu_{\max}\le \mu_s(u_1)+2\mu_s(u_{n-1})-\mu_s(u_n).$

Analogously, from inequality (\ref{pn*}), we obtain $2\mu_{\max}\le \mu_s(u_n)+
2\mu_s(u_{2})-\mu_s(u_1).$ Hence, $4\mu_{\max}\le 2\mu_s(u_{2})+ 2\mu_s(u_{n-1})$ 
and therefore, $\mu_{\max}=\mu_s(u_{2})=\mu_s(u_{n-1}).$ Finally, $\mu_s(u_1)\ge 
\mu_s(u_2)=\mu_{\max}$ and $\mu_s(u_{n})\ge \mu_s(u_{n-1})=\mu_{\max}$. Concluding 
that item $(ii)$ holds. 

\end{proof}

\section{Practical application: ensuring robustness through strong-neighbors Roman domination}

In the context of modern sensor networks, ensuring reliable communication and data transmission 
is crucial, especially in environments where sensor failures can lead to significant operational 
disruptions. A sensor network typically consists of numerous sensors, each represented as a 
vertex in a graph. The edges between vertices denote the communication links between sensors, and 
the strength of these links can vary depending on factors such as distance, signal interference, 
and environmental conditions.

In this model, each vertex represents a sensor node in the network. These sensors are responsible 
for monitoring environmental conditions, transmitting data, or performing specific tasks based on 
the network’s purpose. The edges between the vertices represent communication links or paths through 
which data is transmitted between sensors. These edges can have a fuzzy value assigned representing 
the normalized strength or reliability of the communication link. A higher weight implies a more reliable or 
stronger connection, while a lower weight indicates a weaker or less reliable connection.

An edge in this context is considered "strong" if it represents the most reliable communication 
link between two nodes, compared to alternative paths that might involve intermediary nodes. This 
reliability could be due to factors such as close proximity between sensors, strong signal strength, 
or low interference. The importance of these strong edges lies in their ability to ensure that critical 
data is transmitted directly and efficiently, minimizing the risk of loss or corruption associated 
with less reliable paths.

Nodes in the network may have different capabilities. Some nodes can only gather information and 
transmit it to a central data collection point outside the monitoring network. These nodes are labeled 
with a $1.$ Other nodes can both collect information themselves and receive data from other sensors, 
then relay it to the central point. These nodes are labeled with a $2.$ Finally, there are sensors 
labeled with a $0,$ which can only gather information and transmit it to their stronger neighbors—those 
labeled with a $2.$

In the framework of strong-neighbors Roman domination, a strong edge plays a vital role in protecting 
the network. For instance, if a sensor node is connected to its neighbors through strong edges, 
it indicates that even if some sensors fail, the remaining strong connections can still maintain the 
integrity of the network. This reduces the likelihood of network fragmentation and ensures continuous 
operation.

The strong-neighbors Roman domination number, denoted by $\gamma_{snR}(G)$, is a measure used to determine 
how well a sensor network can be protected under single failures. In this situation, the concept can 
be applied to ensure that each sensor node is adequately assisted by the remaining vertices, especially when 
some connections might be unreliable. 

By ensuring that every sensor with a zero label has neighboring sensors capable of providing the 
necessary help, the network can maintain its functionality even in the event of individual sensor 
failures.

Finding the optimal strong neighbor Roman domination number involves determining the minimum amount of 
resources (in terms of reliability) needed to ensure that the network remains operational under adverse 
conditions. An optimal solution would minimize the sum of the labels assigned to the vertices 
(i.e., the overall ``cost'' of protection) while still satisfying the domination conditions. This directly 
translates to using the least amount of energy, backup power, or redundancy necessary to achieve a 
robust and reliable sensor network.

The practical implications of achieving an optimal solution are significant:
\begin{enumerate}
\item Energy efficiency: In sensor networks, energy consumption is a critical concern, especially for 
battery-powered sensors. By optimizing the defense configuration using the strong-neighbors Roman 
domination model, the network can conserve energy, extending the operational lifetime of the sensors.

\item Fault tolerance: The model ensures that the network is resilient to node failures. Even if a critical 
sensor fails, the remaining nodes can compensate for the loss, preventing the network from becoming 
partitioned or experiencing significant performance degradation.

\item Cost-effectiveness: By minimizing the resources allocated to network defense, the overall cost of 
maintaining the network is reduced. This is particularly important in large-scale deployments where 
resources such as backup sensors, power supplies, or repair teams are limited.
\end{enumerate}

As a matter of example, we may describe the following scenario. 
Consider a sensor network deployed in a smart city environment to monitor traffic conditions. Each 
sensor node gathers data on traffic flow, vehicle speed, and congestion levels. In this context, some 
sensors are more critical than others, such as those located at major intersections. These critical sensors 
need to remain operational at all times to ensure the smooth functioning of the traffic management system.

Using the strong-neighbors Roman domination model, the network is analyzed to determine the minimum 
level of redundancy required to keep these critical sensors operational. Sensors at key intersections might 
be assigned a zero label, indicating their critical nature and the need for strong nearby sensors, labelled 
with a two, that can take over if the primary sensor fails. Less critical sensors, such as those monitoring 
quieter streets, might receive a $1$ label, reducing the overall energy consumption and cost of the network.

In this way, the strong-neighbors Roman domination model provides a robust framework for enhancing the 
reliability and efficiency of sensor networks, ensuring that they can operate effectively even in the face 
of potential failures.

\section{Conclusions and future research directions}

In this paper, we extended the concept of Roman domination to fuzzy graphs by introducing and analyzing 
the strong-neighbors Roman domination number in this new context. Throughout our study, we demonstrated 
several important properties of this novel parameter, as well as its relationship with other domination
numbers in fuzzy graphs.

Among the key results obtained, we provide upper and lower bounds for the strong-neighbors Roman 
domination number in specific fuzzy graphs, including complete and complete bipartite graphs. Furthermore, 
we characterize those fuzzy graphs that achieve extreme values of this parameter, significantly 
contributing to the understanding of fuzzy graph structures under this perspective.

Our approach offers a new tool for addressing practical problems in networks where not all connections 
are equally important. This is particularly relevant in resource optimization in communication systems 
and other applications where Graph Theory is crucial. 

Future research could focus on generalizing these concepts to other, more complex fuzzy graph structures 
or on exploring efficient algorithms to compute the strong-neighbors Roman domination number in large-scale 
fuzzy graphs. 

We outline some promising research directions for further exploration.

\begin{itemize}
    \item Develop heuristics or algorithms that enable us to obtain fairly accurate values
    of the strong-neighbors Roman domination number for certain families of graphs.

    \item There is evidence demonstrating that bounding the value of the Roman domination 
    number, known as the RDF decision problem, remains NP-complete. This complexity holds 
    even under the constraints for the underlying graph to be, for example, chordal, 
    bipartite, split or planar. The strong-neighbors Roman domination decision problem 
    will be NP-complete whenever the original Roman domination problem is. However, what 
    would be in general the algorithmic, complexity and approximation properties of 
    the strong-neighbors Roman domination decision problem?
    
    \item Investigating the applicability of this model to real-world networks, such as 
social networks or biological systems, could yield valuable insights.

\end{itemize}

\section*{Author contributions statement} 
Conceptualization, J. C. Valenzuela-Tripodoro, Pedro Garcia-Vazquez and Martin Cera ; Investigation, 
J. C. Valenzuela-Tripodoro, Pedro Garcia-Vazquez and Martin Cera ; Methodology, 
J. C. Valenzuela-Tripodoro, Pedro Garcia-Vazquez and Martin Cera ; Writing – original draft, 
J. C. Valenzuela-Tripodoro, Pedro Garcia-Vazquez and Martin Cera ; Writing – review \& editing, 
J. C. Valenzuela-Tripodoro, Pedro Garcia-Vazquez and Martin Cera.

\section*{Conflicts of interest} 
The authors declare no conflict of interest.

\section*{Data availability} 
No data was used in this investigation.

\end{document}